\newcommand{\Hom}{{\rm Hom}}
\newcommand{\Ext}{{\rm Ext}}
\newcommand{\Add}{{\rm Add}}
\newtheorem{theorem}{Theorem}[section]
\newtheorem{proposition}[theorem]{Proposition}
\newtheorem{corollary}[theorem]{Corollary}
\newtheorem{lemma}[theorem]{Lemma}
\theoremstyle{definition}
\newtheorem{definition}[theorem]{Definition}
\theoremstyle{remark}
\newtheorem{remark}[theorem]{Remark}
\newtheorem{setup}{Setup}
\numberwithin{equation}{section}
\begin{document}

\title{Derived equivalences induced by nonclassical tilting objects}


\author{Luisa Fiorot}
\address{Dipartimento di Matematica, Universit\`a degli studi di Padova, Via Trieste 63, 35121 Padova (Italy)}
\email{fiorot@math.unipd.it}
\thanks{}

\author{Francesco Mattiello}
\address{Dipartimento di Matematica, Universit\`a degli studi di Padova,  Via Trieste 63, 35121 Padova (Italy)}
\email{mattiell@math.unipd.it}
\thanks{Mattiello is supported by Assegno di ricerca ``Tilting theory in triagulated categories'' del Dipartimento di Matematica dell'Universit\`a degli Studi di Padova  and by Progetto di Eccellenza della fondazione Cariparo.}

\author{Manuel Saor\'in}
\address{Departamento de Matem\'aticas, Universidad de Murcia, Aptdo. 4021, 30100 Espinardo, Murcia (Spain)}
\email{msaorinc@um.es}
\thanks{Saor\'in is
supported by research projects from the Spanish Ministerio de Econom\'ia y Competitividad (MTM2013-46837-P)  and from the Fundaci\'on 'S\'eneca'
of Murcia (19880/GERM/15), with a part of FEDER funds.}
\thanks{The authors thank their institutions for their help. They also thank Jorge Vit\'oria and Alexandra Zvonareva for pointing out some omissions in a first draft of the paper.}

\subjclass[2010]{Primary 18E30; Secondary, 18E10, 18G55}

\date{}

\dedicatory{}


\begin{abstract}
Suppose that $\mathcal{A}$ is an abelian category whose derived category $\mathcal{D}(\mathcal{A})$ has $Hom$ sets and arbitrary (small) coproducts, let $T$  be a (not necessarily classical) ($n$-)tilting object of $\mathcal{A}$ and let $\mathcal{H}$ be the heart of the associated t-structure on $\mathcal{D}(\mathcal{A})$. We show 
that there is a triangulated equivalence of unbounded derived categories $\mathcal{D}(\mathcal{H})\stackrel{\cong}{\longrightarrow}\mathcal{D}(\mathcal{A})$ which is compatible with the inclusion functor $\mathcal{H}\hookrightarrow\mathcal{D}(\mathcal{A})$.
The result admits a straightforward dualization to cotilting objects in abelian categories whose derived category has $Hom$ sets and arbitrary products.
\end{abstract}

\maketitle


\section*{Introduction}

Tilting theory first appeared in the eighties of last century in the context of Representation Theory of finite dimensional algebras (see \cite{BB}, \cite{HR}, \cite{B}) as a generalization of Morita Theory.  Instead of the classical equivalence of module categories given by the theorem of Morita, for a tilting $A$-module $T$, which was always finite dimensional in these seminal papers,  the so-called Brenner-Butler theorem gave a counter-equivalence between the torsion pairs $(\text{Ker}(\text{Ext}_A^1(T,?)),\text{Ker}(\text{Hom}(T,?)))$ and $(\text{Ker}(?\otimes_BT),\text{Ker}(\text{Tor}_1^B(?,T)))$ of $\text{mod}-A$ and $\text{mod}-B$, respectively, where $B=\text{End}(T_A)$ is the endomorphism algebra and $\text{mod}$ denotes the category of finite dimensional modules. This theorem was extended in \cite{CF} by considering $A$ to be an arbitrary ring, $T$ to be a finitely presented $A$-module and the module categories over $A$ and $B$ to be those of all modules, not just
 the finitely presented ones. By definition, in all these papers tilting modules had projective dimension less or equal than one. A generalization of the concept to modules of finite projective dimension greater than one was given in \cite{M}, where the new tilting modules had a finite projective resolution with finitely generated terms (we will call these modules strongly finitely presented). The substitute of Brenner-Butler theorem in this context was a series of equivalences of full subcategories  $\text{Ker}(\text{Ext}_A^k(T,?))\stackrel{\cong}{\longleftrightarrow}\text{Ker}(\text{Tor}_k^B(?,T))$, for all integers $k\in\mathbb{Z}$. 

Up to here, even working over arbitrary rings, tilting modules were assumed to be strongly finitely presented. The historical development brought a new notion of tilting module, where finite projective dimension was still required but not the strongly finitely presented condition. This was first done in \cite{CT}, for modules of projective dimension less or equal than one, and in \cite{AC} for modules of  arbitrary finite projective dimension. Since then the most commonly used terminology reserves the term \emph{classical ($n$-)tilting module} for the strongly finitely presented modules of finite projective dimension (less or equal than $n$), while the term \emph{($n$-)tilting module} applies to all modules, not just the strongly finitely presented ones. 

Even from the beginning, tilting theory expanded from studying tilting modules to study tilting objects in more general abelian categories,  and this extension has played  a fundamental role in several branches of Mathematics, as Algebraic Geometry and Representation Theory of Groups and Algebras (see \cite{AHK}).  

Soon after their introduction, it became clear that tilting modules and tilting objects had to do with derived categories. In \cite{CPS} and \cite{H} (see also \cite{H2}), the authors proved
that any classical tilting module gives an equivalence  of bounded derived categories $\mathcal{D}^b(A)\stackrel{\cong}{\longrightarrow}\mathcal{D}^b(B)$. The general Morita theory for derived categories, developed immediately after by Rickard and Keller (see \cite{R} and \cite{K}),  showed that this equivalence actually extended to one between the corresponding unbounded derived categories. It was then the turn for nonclassical tilting modules to be related to derived categories. This was first done  implicitly in \cite{HRS}, within the study of quasi-tilted algebras. The authors proved that each torsion pair $\mathbf{t}=(\mathcal{T},\mathcal{F})$ in an abelian category $\mathcal{A}$ defined a t-structure in $\mathcal{D}^b(\mathcal{A})$ (and also in $\mathcal{D}(\mathcal{A})$), in the sense of \cite{BBD}, whose heart $\mathcal{H}$ admitted  $(\mathcal{X},\mathcal{Y}):=(\mathcal{F}[1],\mathcal{T})$ as a torsion pair (counterequivalent to $\mathbf{t}$). Then they showed that 
 if either $\mathcal{T}$ was a cogenerating or $\mathcal{F}$ was a generating class, then the inclusion $\mathcal{H}\hookrightarrow\mathcal{D}^b(\mathcal{A})$ extended to a triangulated equivalence $\mathcal{D}^b(\mathcal{H})\stackrel{\cong}{\longrightarrow}\mathcal{D}^b(\mathcal{A})$. This  result can be then applied when $\mathcal{A}=\text{Mod}-A$ and the torsion pair is either $(\text{Ker}(\text{Ext}_A^1(T,?)),\text{Ker}(\text{Hom}(T,?)))$, the one associated to any $1$-tilting module, or $(\text{Ker}(\text{Hom}_A(?,Q)),\text{Ker}(\text{Ext}_A^1(?,Q)))$, the one associated to a $1$-cotilting module (defined dually). Furthermore, the module $T$ is a projective generator of $\mathcal{H}$, which is a progenerator when $T$ is classical $1$-tilting (see \cite[III.4]{BR}). Therefore in this latter case $\mathcal{H}$ is equivalent to $\text{Mod}-B$ and, as a consequence, the equivalence of \cite{CPS} or \cite{H} for classical 1-tilting modules is just a particular case of the mentioned result of 
 \cite{HRS}. For another example of the relatioship between nonclassical tilting modules and derived equivalences, we refer to \cite{BMT}.

When $T$ is an arbitrary $n$-tilting module, we no longer have a torsion pair in $\text{Mod}-A$, although the pair $\tau_T=(T^{\perp_{>0}},T^{\perp_{<0}})$ in $\mathcal{D}(A)$ (or $\mathcal{D}^b(A)$) is still a t-structure (see, for example \cite{NSZM}). The mentioned result of \cite{HRS} naturally leads to two questions. The first one would ask whether the equivalence $\mathcal{D}^b(\mathcal{H})\cong\mathcal{D}^b(A)$ for $1$-tilting modules  also holds at the unbounded level. The second one would ask whether this equivalence (at the bounded or unbounded level) also holds when we replace $1$-tilting (resp. $1$-cotilting) by $n$-tilting ($n$-cotilting), for $n$ arbitrary, and $\mathcal{H}$ is the heart of $\tau_T$. The first question was answered in the affirmative by Xiao Wu Chen \cite{Chen}, for any $1$-tilting object in an arbitrary abelian category. As for  the second question, an affirmative answer in the case of a classical $n$-tilting object in a Grothendieck category, both in the bounded and unbonded
 ed situations,  follows from \cite{FMT}. When $T$ is a nonclassical tilting object, the second question has an affirmative answer at  the bounded level, in essentially any AB3 abelian category,  due to a stronger recent result of Psaroudakis and Vit\'oria (see \cite{PV}[Corollary 5.2]). The main goal of this short note is to prove that the second question  has an affirmative answer also at the unbounded level for any ($n$-)tilting object in essentially any AB3(=cocomplete) abelian category $\mathcal{A}$. More concretely, the following result and its dual will be direct consequences of our main theorem (see Theorem \ref{teor.tilting derived equivalence}):

\vspace*{0.3cm}
{\bf\noindent Proposition 2.3.} {\it Let  $\mathcal{A}$ be an abelian category such that its derived category $\mathcal{D}(\mathcal{A})$ has $Hom$ sets and arbitrary (small) coproducts, let $T$ be a tilting object in $\mathcal{A}$ 
and let $\mathcal{H}$ be the heart of the associated t-structure $\tau_T=(T^{\perp_{>0}},T^{\perp_{<0}})$ in $\mathcal{D}(\mathcal{A})$. Then there is a triangulated equivalence $\mathcal{D}(\mathcal{H})\stackrel{\cong}{\longrightarrow}\mathcal{D}(\mathcal{A})$ which is compatible with the inclusion functor $\mathcal{H}\hookrightarrow\mathcal{D}(\mathcal{A})$.
}

In this statement and in the rest of the paper, the compatibility with the inclusion $\mathcal{H}\hookrightarrow\mathcal{D}(\mathcal{A})$ means that the restriction of the equivalence to $\mathcal{H}$ is naturally isomorphic to the inclusion functor. 
\vspace*{0.3cm}

The organization of the paper goes as follows. In Section 1, we recall some basic fact about $t$-structures in triangulated categories and we prove the Tilting Theorem~\ref{teor.tilting derived equivalence}. Here the setup is very general. We consider a 
$t$-structure $\tau =(\mathcal{U},\mathcal{U}^\perp [1])$ in $\mathcal{D}(\mathcal{A})$ such that $\mathcal{D}^{\leq -n}(\mathcal{A})\subseteq\mathcal{U}\subseteq\mathcal{D}^{\leq 0}(\mathcal{A})$ and consider the class $\mathcal{Y}:=\mathcal{A}\cap\mathcal{H}_\tau$, where $\mathcal{H}_\tau$ is the heart of $\tau$. Theorem \ref{teor.tilting derived equivalence} states that if $\mathcal{Y}$ is cogenerating in $\mathcal{A}$, then there is a triangulated equivalence $\mathcal{D}(\mathcal{H}_\tau)\stackrel{\cong}{\longrightarrow}\mathcal{D}(\mathcal{A})$ which is compatible with the inclusion functor $\mathcal{H}_\tau\hookrightarrow\mathcal{D}(\mathcal{A})$.
In Section 2 we apply the Tilting Theorem to nonclassical tilting objects. Firstly, we give the definition of (nonclassical) tilting object in any abelian category as in the statement and a give characterizations suitable for our purposes. Both, definition and characterizations, are taken from  \cite{NSZM}. Then we show that the above Proposition~\ref{cor.tilting object Grothendieck category} and its dual (see Proposition \ref{cor.equivalence cotilting}, which includes a recent result of Stovicek about derived equivalences induced by $n$-cotilting modules) are direct consequences of the Tilting Theorem. Finally, as a byproduct, we prove that a ($n$-)tilting object as in the situation of Proposition~\ref{cor.tilting object Grothendieck category} is self-small if and only if it is classical tilting (Corollary \ref{cor.self-small tilting object}).  Except for the case $n=1$ (see \cite{CT}[Proposition 1.3]), this fact seems to be unknown even for tilting modules.

\section{Tilting Theorem}
Let $\mathcal{D}$ be an additive category. For any full subcategory $\mathcal{D}'$ of $\mathcal{D}$ we denote by 
$^\perp\mathcal{D}'$ the following full subcategory of $\mathcal{D}$: 
\[
^\perp\mathcal{D}':=\{X\in \mathcal{D} \,|\, \Hom_\mathcal{D}(X,Y)=0, \text{~for all~} Y\in\mathcal{D}'\},
\]
 and by $\mathcal{D}'^\perp$ the following full subcategory of $\mathcal{D}$: 
\[
\mathcal{D}'^\perp:=\{X\in \mathcal{D} \,|\, \Hom_\mathcal{D}(Y,X)=0, \text{~for all~} Y\in\mathcal{D}'\}.
\]

Let $\mathcal{D}$ be a triangulated category and denote its suspension functor by $?[1]$. If $\mathcal{U},\mathcal{V}$ are full subcategories of $\mathcal{D}$, then we denote by $\mathcal{U}\star \mathcal{V}$ the category of extensions of $\mathcal{V}$ by $\mathcal{U}$, that is, the full subcategory of $\mathcal{D}$ consisting of objects $X$ which may be included in a distinguished triangle $U\to X\to V \stackrel{+}\to\,$ in 
$\mathcal{D}$, with $U \in \mathcal{U}$ and $V \in \mathcal{V}$.

Recall that a {\it $t$-structure} in $\mathcal{D}$ is a pair $\tau=(\mathcal{U},\mathcal{V})$ of full additive subcategories of $\mathcal{D}$ which satisfy the following properties:
\begin{enumerate}
\item[\rm (i)]$\Hom_\mathcal{D}(U,V[-1]) = 0$, for all $U \in \mathcal{U}$ and $V \in \mathcal{V}$.
\item[\rm (ii)]$\mathcal{U}[1] \subseteq \mathcal{U}$.
\item[\rm (iii)]$\mathcal{D}=\mathcal{U}\star\mathcal{V}[-1]$, that is, for each object $X$ of $\mathcal{D}$, there is a distinguished triangle $U \to X \to V \stackrel{+}\to$ in $\mathcal{D}$, where 
$U \in \mathcal{U}$ and $V \in \mathcal{V}[-1]$.
\end{enumerate}
In such case one has $\mathcal{V}=\mathcal{U}^\perp[1]$ and $\mathcal{U}=\;^\perp(\mathcal{V}[-1])=\;^\perp(\mathcal{U}^\perp)$. Thus, we may write a $t$-structure as 
$\tau=(\mathcal{U},\mathcal{U}^\perp[1])$. The objects $U$ and $V$ in the above triangle are uniquely determined by $X$, up to isomorphism, and define functors $\tau_\mathcal{U}\colon \mathcal{D} \to \mathcal{U}$ and 
$\tau^{\mathcal{U}^\perp} \colon \mathcal{D} \to \mathcal{U}^\perp$ which are right and left adjoints to the respective inclusion functors. The full subcategory $\mathcal{H}_\tau = \mathcal{U} \cap \mathcal{V} = \mathcal{U} \cap \mathcal{U}^\perp[1]$ is called the {\it heart} of the $t$-structure and it is an abelian category, where the short exact sequences ``are'' the triangles in $\mathcal{D}$ with their three terms in $\mathcal{H}_\tau$. The assignments 
$X\leadsto(\tau_\mathcal{U} \circ \tau^{U^\perp[1]})(X)$ and 
$X \leadsto (\tau^{\mathcal{U}\perp[1]} \circ \tau_{\mathcal{U}})(X)$ define naturally isomorphic functors $\mathcal{D} \to \mathcal{H}$ which are cohomological (see~\cite{BBD}). In the sequel, we will use
the notation $\tau^{[a,b]}=\mathcal{U}[-b] \cap \mathcal{U}^\perp[1-a]$, where $a\leq b$ are integers. Given an abelian category $\mathcal{A}$, its derived category $\mathcal{D}(\mathcal{A})$ is a triangulated category which admits a 
$t$-structure $\delta=(\mathcal{D}(\mathcal{A})^{\leq0},\mathcal{D}(\mathcal{A})^{\geq0})$, called the {\it natural $t$-structure}, where 
$\mathcal{D}(\mathcal{A})^{\leq0}$ (resp. $\mathcal{D}(\mathcal{A})^{\geq0}$) is the full subcategory of complexes without cohomology in positive (resp. negative) degrees. The heart of $\delta$ is (naturally equivalent to) $\mathcal{A}$.

For the rest of this section, we make the following assumptions and set the following notation: 

\begin{setup}\label{setup1}$\mathcal{A}$ is any abelian category such that $\mathcal{D}(\mathcal{A})$ has $Hom$ sets. We denote by $\delta=(\mathcal{D}(\mathcal{A})^{\leq0},\mathcal{D}(\mathcal{A})^{>0}[1])$ the natural $t$-structure on $\mathcal{D}(\mathcal{A})$ and by $\tau=(\mathcal{U},\mathcal{U}^\perp[1])$ another $t$-structure on 
$\mathcal{D}(\mathcal{A})$, whose heart is $\mathcal{H}_\tau$. We shall assume that 
\[\mathcal{D}(\mathcal{A})^{\leq-n}\subseteq \mathcal{U} \subseteq \mathcal{D}(\mathcal{A})^{\leq0} \quad (\text{equivalently,}\quad
\mathcal{D}(\mathcal{A})^{>0}\subseteq \mathcal{U}^\perp \subseteq \mathcal{D}(\mathcal{A})^{>{-n}}). 
\]
Consider the class $\mathcal{Y}=\mathcal{A}\cap\mathcal{H}_\tau$ in the sequel. We denote by 
$\mathcal{K}_{ac}^{\mathcal{A}}(\mathcal{Y})$ (resp. $\mathcal{K}_{ac}^{\mathcal{H}_\tau}(\mathcal{Y})$) the additive full subcategory of the homotopy category $\mathcal{K}(\mathcal{Y})$ of cochain complexes on $\mathcal{Y}$ whose objects are the complexes of $\mathcal{K}(\mathcal{Y})$ which are acyclic in 
$\mathcal{A}$ (resp. in $\mathcal{H}_\tau$).
\end{setup}

\begin{lemma}\label{cohom}
Let $s$ be a natural number and let $Y^\bullet$ be the complex
\[
\xymatrix@-2pt{\dots \ar[r] &0 \ar[r] & Y^{-s} \ar^{d^{-s}}[r]& Y^{-s+1} \ar[r]& \dots \ar[r]& Y^{-1} \ar^{d^{-1}}[r]& Y^{0} \ar[r]& 0\ar[r]& \dots}
\]
where $Y_i$ is an object of $\mathcal{Y}$, for every $i=-s, \dots, -1, 0$. Then $Y^\bullet \in \delta^{[-s,0]} \cap \tau^{[-s,0]}$.
\end{lemma}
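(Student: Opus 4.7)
The plan is to realise $Y^\bullet$ as an iterated extension of its shifted stalks $Y^i[-i]$ for $-s\le i\le 0$, and then check that each such stalk already lives in $\delta^{[-s,0]}\cap\tau^{[-s,0]}$, a class that is closed under extensions. This reduces the problem to three small pieces: a decomposition step, a per-stalk membership check, and an extension-closure argument.

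For the decomposition I would use the brutal-truncation short exact sequences of complexes
\[
0\to \sigma^{>i}Y^\bullet\to \sigma^{\ge i}Y^\bullet\to Y^i[-i]\to 0,
\]
starting from $\sigma^{\ge -s}Y^\bullet=Y^\bullet$ and descending $i$ from $0$ down to $-s$. Passing to $\mathcal{D}(\mathcal{A})$ yields distinguished triangles exhibiting $Y^\bullet$ as a finite iterated extension of the stalk complexes $Y^0,\,Y^{-1}[1],\,\dots,\,Y^{-s}[s]$. This part is purely formal and will not cause difficulty.

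Next I would show that every shifted stalk $Y^i[-i]$ with $-s\le i\le 0$ lies in both $\delta^{[-s,0]}$ and $\tau^{[-s,0]}$. Since $Y^i\in\mathcal{Y}=\mathcal{A}\cap\mathcal{H}_\tau$, it is simultaneously in the heart of $\delta$ and in the heart of $\tau$. For the natural $t$-structure, $Y^i[-i]$ has cohomology concentrated in degree $i\in[-s,0]$, hence lies in $\delta^{[-s,0]}$. For the $t$-structure $\tau$, $Y^i\in\mathcal{U}\cap\mathcal{U}^\perp[1]$; using $\mathcal{U}[1]\subseteq\mathcal{U}$ together with the dual inclusion $\mathcal{U}^\perp\subseteq\mathcal{U}^\perp[1]$ (which is immediate from $\Hom(U,X[-1])=\Hom(U[1],X)=0$ for $U\in\mathcal{U}$, $X\in\mathcal{U}^\perp$), one obtains
\[
Y^i[-i]\in \mathcal{U}[-i]\cap\mathcal{U}^\perp[1-i]\subseteq \mathcal{U}\cap\mathcal{U}^\perp[1+s]=\tau^{[-s,0]},
\]
since $-i\ge 0$ and $1-i\le 1+s$ under the range of $i$.

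Finally, aisles and coaisles of any $t$-structure are closed under extensions, hence so are $\mathcal{U}$, $\mathcal{U}^\perp[1+s]$, $\mathcal{D}(\mathcal{A})^{\le 0}$, and $\mathcal{D}(\mathcal{A})^{\ge -s}$, and therefore their common intersection $\delta^{[-s,0]}\cap\tau^{[-s,0]}$ is closed under extensions. Combined with the first step, this places $Y^\bullet$ in $\delta^{[-s,0]}\cap\tau^{[-s,0]}$. The only mildly subtle point in the whole argument is getting the direction of the shift inclusions for the coaisle correct; apart from that, the proof is routine truncation bookkeeping.
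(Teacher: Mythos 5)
Your proof is correct and follows essentially the same route as the paper: both realise $Y^\bullet$ as an iterated extension of the shifted stalks $Y^{i}[-i]$ (the paper phrases this via the subcategory $(\mathcal{D}(\mathcal{A})^{>0}[1]\cap\mathcal{U})[0]\star\cdots\star(\mathcal{D}(\mathcal{A})^{>0}[1]\cap\mathcal{U})[s]$), place each stalk in the relevant aisles and shifted coaisles using $\mathcal{U}[1]\subseteq\mathcal{U}$ and $\mathcal{U}^\perp\subseteq\mathcal{U}^\perp[1]$ together with the Setup inclusions, and conclude by closure of these classes under extensions. No issues.
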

\begin{proof}
The complex $Y^\bullet$ belongs to the following full subcategory of $\mathcal{D}(\mathcal{A})$:
\[
(\mathcal{D}(\mathcal{A})^{>0}[1] \cap \mathcal{U})[0] \star (\mathcal{D}(\mathcal{A})^{>0}[1] \cap \mathcal{U})[1] \star \cdots \star (\mathcal{D}(\mathcal{A})^{>0}[1] \cap \mathcal{U})[s-1] \star (\mathcal{D}(\mathcal{A})^{>0}[1] \cap \mathcal{U})[s],
\]
The latter subcategory is contained in $\mathcal{D}(\mathcal{A})^{>0}[1+s] \cap \mathcal U$. By the assumption made in Setup~\ref{setup1}, 
$\mathcal U \subseteq \mathcal{D}(\mathcal{A})^{\leq0}$ and $\mathcal{D}(\mathcal{A})^{>0}[1+s]\subseteq \mathcal{U}^\perp[1+s]$. From this we conclude that $Y^\bullet$ belongs to
\[
(\mathcal{D}(\mathcal{A})^{\leq0}\cap \mathcal{D}(\mathcal{A})^{>0}[1+s])\cap (\mathcal{U}\cap \mathcal{U}^\perp[1+s])=\delta^{[-s,0]} \cap \tau^{[-s,0]}.
\]
\end{proof}

\begin{lemma}\label{lemma0}
Let $Y^{-n}\stackrel{f_{-n}}\to Y^{-n+1}\to \dots \to Y^{-1}\stackrel{f_{-1}}\to Y^0$ be a sequence of morphisms in 
$\mathcal{Y}$ such that $f_{-k}\circ f_{-k-1}=0$, for every $k=1,\dots, n-1$, where $n$ is the natural number such that $\mathcal{D}(\mathcal{A})^{\leq-n}\subseteq \mathcal{U} \subseteq \mathcal{D}(\mathcal{A})^{\leq0}$. The following assertions hold.
\begin{enumerate}
\item If the sequence is exact in $\mathcal{H}_\tau$, then ${\rm Ker}_{\mathcal{H}_\tau}f_{-n} \in \mathcal{Y}$.
\item If the sequence is exact in $\mathcal{A}$, then ${\rm Coker}_\mathcal{A}f_{-1} \in \mathcal{Y}$.
\end{enumerate}
\end{lemma}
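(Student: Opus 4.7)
The plan is to apply Lemma~\ref{cohom}, which places $Y^\bullet$ in $\delta^{[-n,0]}\cap\tau^{[-n,0]}$, and then combine a suitable truncation triangle with either the long exact sequence of $\delta$-cohomology (for (1)) or the closure of $\mathcal{U}$ under extensions (for (2)).

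For (1), set $K:=\mathrm{Ker}_{\mathcal{H}_\tau}(f_{-n})$ and $C:=\mathrm{Coker}_{\mathcal{H}_\tau}(f_{-1})$. Exactness in $\mathcal{H}_\tau$ gives an exact sequence $0\to K\to Y^{-n}\to\cdots\to Y^0\to C\to 0$ in $\mathcal{H}_\tau$; splitting it into short exact sequences and iterating the octahedral axiom (or equivalently invoking the realization functor $\mathcal{D}^b(\mathcal{H}_\tau)\to\mathcal{D}(\mathcal{A})$ from~\cite{BBD}) produces a distinguished triangle
\[
K[n]\longrightarrow Y^\bullet\longrightarrow C\longrightarrow K[n+1]
\]
in $\mathcal{D}(\mathcal{A})$. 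Setup~\ref{setup1} gives $\mathcal{H}_\tau\subseteq\mathcal{D}(\mathcal{A})^{\leq 0}\cap\mathcal{D}(\mathcal{A})^{\geq -n}$, so $H^i_\delta(C)=H^i_\delta(K)=0$ whenever $i<-n$ or $i>0$; combined with $H^i_\delta(Y^\bullet)=0$ for $i<-n$ (from Lemma~\ref{cohom}), the long exact sequence of $\delta$-cohomology derived from the triangle forces $H^i_\delta(K)=0$ for every $i<0$. Hence $K\in\mathcal{A}$ and so $K\in\mathcal{Y}$.

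For (2), set $L:=\mathrm{Ker}_{\mathcal{A}}(f_{-n})$ and $M:=\mathrm{Coker}_{\mathcal{A}}(f_{-1})$. Exactness in $\mathcal{A}$ identifies $L$ and $M$ with $H^{-n}_\delta(Y^\bullet)$ and $H^0_\delta(Y^\bullet)$ respectively, and the natural truncations yield a distinguished triangle $L[n]\to Y^\bullet\to M\to L[n+1]$. Since $L\in\mathcal{A}$ one has $L[n+1]\in\mathcal{D}(\mathcal{A})^{\leq -n-1}\subseteq\mathcal{U}$, while $Y^\bullet\in\mathcal{U}$ by Lemma~\ref{cohom}. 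Closure of $\mathcal{U}$ under extensions applied to the rotated triangle $Y^\bullet\to M\to L[n+1]\to Y^\bullet[1]$ then gives $M\in\mathcal{U}$; combined with $M\in\mathcal{A}\subseteq\mathcal{D}(\mathcal{A})^{\geq 0}\subseteq\mathcal{U}^\perp[1]$ (the last inclusion obtained by shifting $\mathcal{D}(\mathcal{A})^{>0}\subseteq\mathcal{U}^\perp$), we conclude $M\in\mathcal{U}\cap\mathcal{U}^\perp[1]\cap\mathcal{A}=\mathcal{Y}$.

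The main technical point is the construction of the triangle in (1): one must bridge exactness of the sequence viewed as a complex in the abelian category $\mathcal{H}_\tau$ and cohomology with respect to the $t$-structure $\tau$ of $Y^\bullet$ regarded as an object of $\mathcal{D}(\mathcal{A})$; the iterated octahedral construction furnishes this bridge. Assertion (2) is by comparison routine, since $\delta$-cohomology of a complex of $\mathcal{A}$-objects tautologically coincides with the cohomology of the underlying complex.
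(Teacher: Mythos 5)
Your proof is correct and follows essentially the same route as the paper's: both parts rest on Lemma~\ref{cohom} together with a distinguished triangle $K[n]\to Y^\bullet\to C\to K[n+1]$ obtained by iterated octahedra (resp.\ the standard truncation triangle in $\mathcal{A}$ for part (2)). The only cosmetic differences are that you extract the degree bound on $K$ from the long exact sequence of $\delta$-cohomology where the paper shifts the triangle by $[-n]$ and uses closure of $\mathcal{D}(\mathcal{A})^{>0}[1]$ under extensions, and that you spell out assertion (2), which the paper dispatches as ``the symmetric argument.''
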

\begin{proof}
Let $Y^\bullet$ be the complex in $\mathcal{D}(\mathcal{A})$ obtained from the sequence 
$Y^{-n}\stackrel{f_{-n}}\to Y^{-n+1}\to \dots \to Y^{-1}\stackrel{f_{-1}}\to Y^0$ by completing with zeros on both sides. By Lemma~\ref{cohom}, $Y^\bullet \in \delta^{[-n,0]} \cap \tau^{[-n,0]}$.

1) For every $k=0,1,\dots,n$ there is a distinguished triangle in $\mathcal{D}(\mathcal{A})$
\[
\xymatrix{{\rm Ker}_{\mathcal{H}_\tau}(f_{-k})\ar[r]& Y^{-k}\ar[r]& {\rm Ker}_{\mathcal{H}_\tau}(f_{-k+1})\ar^-{+}[r]&}
\]
(with the convention that $f_0 \colon Y^0 \to L:={\rm Coker}_{\mathcal{H}_\tau}(f_{-1})=\text{Ker}_{\mathcal{H}_\tau}(f_1)$ is the cokernel morphism of $f_{-1}$). By an iterated use of the octahedral axiom, we obtain the following distinguished triangle in $\mathcal{D}(\mathcal{A})$:
\[
\xymatrix{{\rm Ker}_{\mathcal{H}_\tau}(f_{-n})[n]\ar[r]& Y^\bullet \ar[r]& L\ar^-{+}[r]&}
\]
thus by shifting:
\[
\xymatrix{{\rm Ker}_{\mathcal{H}_\tau}(f_{-n})\ar[r]& Y^\bullet[-n] \ar[r]& L[-n]\ar^-{+}[r]&}
\]
Now, $Y^\bullet[-n] \in \mathcal{D}(\mathcal{A})^{>0}[1]$ and 
$L[-n] \in \mathcal{H}_\tau[-n]\subseteq \mathcal{D}(\mathcal{A})^{>0}[1]$. Thus 
${\rm Ker}_{\mathcal{H}_\tau}(f_{-n}) \in \mathcal{D}(\mathcal{A})^{>0}[1]$. Therefore 
${\rm Ker}_{\mathcal{H}_\tau}(f_{-n}) \in \mathcal{D}(\mathcal{A})^{\leq0} \cap \mathcal{D}(\mathcal{A})^{>0}[1]=\mathcal{A}$ since 
$\mathcal{H}_\tau \subseteq \mathcal{D}(\mathcal{A})^{\leq0}$. It follows that 
${\rm Ker}_{\mathcal{H}_\tau}(f_{-n}) \in \mathcal{A}\cap \mathcal{H}_\tau=\mathcal{Y}$.

2) This is the symmetric argument of the previous one, using the fact that $Y^\bullet \in \tau^{[-n,0]}$.
\end{proof}

\begin{proposition}\label{acyclic}
Let $Y^\bullet$ be a complex of objects in $\mathcal{Y}$. The following are equivalent:
\begin{enumerate}
\item $Y^\bullet$ is acyclic in $\mathcal{A}$.
\item $Y^\bullet$ is acyclic in $\mathcal{H}_\tau$.
\end{enumerate}
\end{proposition}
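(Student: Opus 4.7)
The plan is to prove both implications symmetrically, using Lemma~\ref{lemma0} to show that the cycle objects of $Y^\bullet$ lie in $\mathcal{Y}=\mathcal{A}\cap\mathcal{H}_\tau$, and then transferring short exact sequences between $\mathcal{A}$ and $\mathcal{H}_\tau$ via the fact that both are hearts of $t$-structures on $\mathcal{D}(\mathcal{A})$.

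For $(1)\Rightarrow(2)$, I would first fix $k\in\mathbb{Z}$ and apply Lemma~\ref{lemma0}(2) to the subsequence $Y^k\to Y^{k+1}\to\cdots\to Y^{k+n}$, which is exact in $\mathcal{A}$ by hypothesis: this yields $\Coker_\mathcal{A}(d^{k+n-1})\in\mathcal{Y}$. By the first isomorphism theorem combined with acyclicity in $\mathcal{A}$, this cokernel is naturally isomorphic to $Z^{k+n+1}:=\Ker_\mathcal{A}(d^{k+n+1})$, so every $\mathcal{A}$-cycle $Z^j$ of $Y^\bullet$ belongs to $\mathcal{Y}$. Acyclicity then gives short exact sequences $0\to Z^k\to Y^k\to Z^{k+1}\to 0$ in $\mathcal{A}$ with all three terms in $\mathcal{H}_\tau$. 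The key observation at this point is that such a short exact sequence in $\mathcal{A}$ corresponds to a distinguished triangle in $\mathcal{D}(\mathcal{A})$ whose vertices lie in $\mathcal{H}_\tau$, and is therefore also short exact in $\mathcal{H}_\tau$. Splicing these via the factorization $d^k\colon Y^k\twoheadrightarrow Z^{k+1}\hookrightarrow Y^{k+1}$ (valid in $\mathcal{H}_\tau$ as well as in $\mathcal{A}$) shows that in $\mathcal{H}_\tau$ the image of $d^k$ and the kernel of $d^{k+1}$ both coincide with $Z^{k+1}$, yielding acyclicity there.

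The implication $(2)\Rightarrow(1)$ is proved by the completely symmetric argument, using Lemma~\ref{lemma0}(1) in place of (2): I set $Z^k:=\Ker_{\mathcal{H}_\tau}(d^k)$, apply the lemma to the subsequence $Y^{k-n}\to\cdots\to Y^k$ (exact in $\mathcal{H}_\tau$) to get $Z^{k-n}\in\mathcal{Y}$, and then transport the resulting short exact sequences from $\mathcal{H}_\tau$ to $\mathcal{A}$ via the same distinguished-triangle observation before splicing. I expect no real obstacle beyond mild bookkeeping; the only subtlety is the verification that a distinguished triangle in $\mathcal{D}(\mathcal{A})$ with all three vertices in $\mathcal{Y}$ is simultaneously short exact in $\mathcal{A}$ and in $\mathcal{H}_\tau$, which is exactly the characterization of short exact sequences in the heart of a $t$-structure.
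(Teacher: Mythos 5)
Your proof is correct and follows essentially the same route as the paper: both apply Lemma~\ref{lemma0}(2) to length-$n$ exact subsequences to place the cycle/cokernel objects in $\mathcal{Y}$ (your $Z^{k+n+1}$ is the paper's $\Coker_\mathcal{A}(d^{k+n-1})$ under acyclicity, just a reindexing), then use the fact that a short exact sequence in $\mathcal{A}$ with all terms in $\mathcal{H}_\tau$ is a distinguished triangle in $\mathcal{D}(\mathcal{A})$ and hence short exact in $\mathcal{H}_\tau$, and finally argue symmetrically for the converse via Lemma~\ref{lemma0}(1).
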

\begin{proof}
$1)\Longrightarrow 2)$ The complex $Y^\bullet$ is obtained by gluing the following short exact sequences in $\mathcal{A}$:
\begin{equation}\label{seq}
\begin{gathered}
\xymatrix{0\ar[r]& {\rm Coker}_\mathcal{A}(d^{k-2}_{Y^\bullet})\ar[r]& Y^k\ar[r]& {\rm Coker}_\mathcal{A}(d^{k-1}_{Y^\bullet})\ar[r]& 0}, \quad k\in \mathbb{Z}.
\end{gathered}
\end{equation}
Now the sequences $Y^{k-n} \stackrel{d^{k-n}_{Y^\bullet}}\to Y^{k-n+1}\to \dots \to Y^{k}$, $k\in \mathbb{Z}$, satisfy the hypothesis of Lemma~\ref{lemma0}, 2), hence 
${\rm Coker}_\mathcal{A}(d^{k-1}_{Y^\bullet}) \in \mathcal Y$, for all $k\in\mathbb Z$. Then we have distinguished triangles in 
$\mathcal{D}(\mathcal{A})$
\[
\xymatrix{ {\rm Coker}_\mathcal{A}(d^{k-2}_{Y^\bullet})\ar[r]& Y^k\ar[r]& {\rm Coker}_\mathcal{A}(d^{k-1}_{Y^\bullet})\ar[r]^-+&}, \quad k\in \mathbb{Z}.
\]
with all the terms in $\mathcal{H}_\tau$. From this we can conclude that the sequences~(\ref{seq}) are short exact in 
$\mathcal{H}_\tau$ and hence that $Y^\bullet$ is acyclic in $\mathcal{H}_\tau$.

$2)\Longrightarrow 1)$ This is just a symmetric argument using Lemma~\ref{lemma0}, 1).
\end{proof}

In order to construct resolutions of complexes, we will need the following two auxiliary results and their duals, which hold true since the opposite category of an abelian category is again abelian.

\begin{lemma}\cite[Lemma~13.2.1]{KaSc}\label{ks1}
Let $\mathcal{A}$ be an abelian category, let $\mathcal I$ be an additive full subcategory of $\mathcal A$ and let 
$X \in \text{C}^{\geq a}(\mathcal A)$ for some $a \in \mathbb{Z}$. Assume that $\mathcal I$ is cogenerating in $\mathcal A$. Then there exist $Y \in \text{C}^{\geq a}(\mathcal I)$ and a quasi-isomorphism $X \to Y$.
\end{lemma}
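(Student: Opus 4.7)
The plan is to construct the complex $Y$ and the quasi-isomorphism $f\colon X \to Y$ by induction on the degree, using the cogenerating property of $\mathcal{I}$ to embed ``correction'' objects at each step. First I set $Y^i := 0$ for $i < a$ and pick an embedding $f^a\colon X^a \hookrightarrow Y^a$ with $Y^a \in \mathcal{I}$, which exists because $\mathcal{I}$ is cogenerating. The inductive invariant at stage $n \geq a$ is that I have built $Y^a \to \cdots \to Y^n$ with each $Y^i \in \mathcal{I}$, together with a partial chain map $f^{\leq n}\colon X^{\leq n} \to Y^{\leq n}$ inducing an isomorphism on $H^i$ for all $a \leq i < n$.

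For the inductive step from $n$ to $n+1$, let $\pi\colon Y^n \to \bar Y^n := \Coker_{\mathcal{A}}(d_Y^{n-1})$ and set $\bar f^n := \pi \circ f^n$; compatibility of $f^{\leq n}$ with the differentials yields $\bar f^n \circ d_X^{n-1} = 0$. I would form the pushout
\[
P := \bar Y^n \sqcup_{X^n} X^{n+1}
\]
in $\mathcal{A}$ along $\bar f^n$ and $d_X^n$, then use the cogenerating property to embed $P \hookrightarrow Y^{n+1}$ for some $Y^{n+1} \in \mathcal{I}$, and define $d_Y^n\colon Y^n \to Y^{n+1}$ and $f^{n+1}\colon X^{n+1} \to Y^{n+1}$ as the evident compositions. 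Since $d_Y^n$ factors through $\bar Y^n$, we get $d_Y^n \circ d_Y^{n-1} = 0$ automatically, while the pushout relation gives $d_Y^n \circ f^n = f^{n+1} \circ d_X^n$.

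The main obstacle is checking that the inductive invariant is preserved, i.e.\ that $H^n(f^{\leq n+1})\colon H^n(X) \to H^n(Y^{\leq n+1})$ is an isomorphism. Surjectivity follows directly from the construction of $P$: any $n$-cocycle in $Y^{\leq n+1}$ is an element of $Y^n$ whose image in $P$ vanishes, and the presentation of $P$ as a pushout shows that, modulo $\mathrm{im}(d_Y^{n-1})$, such an element is $f^n(x)$ for some $x \in \ker(d_X^n)$. Injectivity uses that the map $P \hookrightarrow Y^{n+1}$ is a monomorphism, so no new relations are introduced beyond those imposed by the pushout, combined with the fact that the partial invariant at stage $n$ governs which classes in $H^n(X)$ could map to zero. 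Taking the colimit (equivalently, union) of this inductive data produces the desired $Y \in \mathrm{C}^{\geq a}(\mathcal{I})$ together with the quasi-isomorphism $f\colon X \to Y$.
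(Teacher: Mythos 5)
The paper does not actually prove this lemma; it is quoted verbatim from Kashiwara--Schapira \cite[Lemma 13.2.1]{KaSc}, so your attempt has to be measured against their argument. Your construction is exactly theirs (build $Y$ degree by degree, push out $\mathrm{Coker}(d_Y^{n-1})\leftarrow X^n\rightarrow X^{n+1}$ and embed the result into an object of $\mathcal I$), but the verification that $H^n(f)$ is \emph{injective} has a genuine gap: the inductive invariant you carry (only that $H^i(f)$ is an isomorphism for $a\leq i<n$) is too weak to run that step. Concretely, write $\bar X^n=\mathrm{Coker}(d_X^{n-1})$, $\bar Y^n=\mathrm{Coker}(d_Y^{n-1})$, let $\bar f^n\colon\bar X^n\to\bar Y^n$ be the induced map and $\bar d_X^n\colon\bar X^n\to X^{n+1}$ the factorization of $d_X^n$, so that $P=\bar Y^n\sqcup_{\bar X^n}X^{n+1}$ (your pushout over $X^n$ coincides with this one). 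A direct computation gives $\ker(\bar Y^n\to P)=\bar f^n\bigl(\ker\bar d_X^n\bigr)$; since $P\hookrightarrow Y^{n+1}$ is a monomorphism this kernel is $H^n(Y^{\leq n+1})$, and the map $H^n(X)\to H^n(Y^{\leq n+1})$ is just the restriction of $\bar f^n$ to $\ker\bar d_X^n=H^n(X)$. This restriction is injective precisely when $\ker\bar f^n\cap\ker\bar d_X^n=0$, and nothing in ``$H^i(f)$ is an isomorphism for $i<n$'' guarantees that $\bar f^n$ has this property. The sentence claiming that ``the partial invariant at stage $n$ governs which classes in $H^n(X)$ could map to zero'' is exactly where the argument breaks.

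The repair is the one Kashiwara--Schapira build into their induction: add to the inductive hypothesis that $\bar f^n\colon\mathrm{Coker}(d_X^{n-1})\to\mathrm{Coker}(d_Y^{n-1})$ is a monomorphism. This holds at the base step because you chose $f^a\colon X^a\hookrightarrow Y^a$ to be a monomorphism, and it propagates automatically through your own construction: the pushout property gives $\mathrm{Coker}(\bar Y^n\to P)\cong\mathrm{Coker}(\bar X^n\to X^{n+1})=\mathrm{Coker}(d_X^n)$, and composing with the monomorphism $P\hookrightarrow Y^{n+1}$ shows that $\mathrm{Coker}(d_X^n)\to\mathrm{Coker}(d_Y^n)$ is again a monomorphism. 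With this extra clause, $\bar f^n$ is a monomorphism, hence so is its restriction to $\ker\bar d_X^n$, and together with your (correct) surjectivity computation this yields that $H^n(f)$ is an isomorphism. So the construction itself is the right one; what is missing is precisely this additional invariant, without which the induction does not close.
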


\begin{proposition}\cite[Proposition~13.2.6]{KaSc}\label{ks2}
Let $\mathcal{A}$ be an abelian category and let $\mathcal I$ be an additive full subcategory of $\mathcal A$. Assume that:
\begin{itemize}
\item[\rm (i)]$\mathcal I$ is cogenerating,
\item[\rm (ii)]there exists a non-negative integer $d$ such that, for any exact sequence $Y^d \to \cdots \to Y^1\to Y \to 0$, with 
$Y^j \in \mathcal I$ for $1 \leq j\leq d$, we have $Y\in \mathcal I$.
\end{itemize}
Then for any complex $X \in \text{C}(\mathcal A)$, there exist $Y \in \text{C}(\mathcal I)$ and a quasi-isomorphism $X \to Y$. In particular, there is an equivalence of triangulated categories 
\[
\text{K}(\mathcal I)/\mathcal{K}_{ac}^{\mathcal{A}}(\mathcal I) \stackrel{\cong}\to \mathcal{D}(\mathcal A),
\]
where $\mathcal{K}_{ac}^{\mathcal{A}}(\mathcal I)$ is the class of complexes on $\mathcal I$ which are acyclic in $\mathcal A$.
\end{proposition}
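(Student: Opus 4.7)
The plan is to reduce the unbounded case to the bounded-below resolution result of Lemma~\ref{ks1} via a Cartan--Eilenberg type construction, with hypothesis (ii) supplying the uniform vertical bound needed to totalize and to guarantee convergence of the associated spectral sequence. First I would verify that conditions (i) and (ii) together force every object $A\in\mathcal{A}$ to admit a finite $\mathcal{I}$-resolution of length at most $d$. Iterating the cogenerating property produces an exact sequence $0\to A\to I^0\to I^1\to\cdots$ with $I^j\in\mathcal{I}$; applying (ii) to the $d$-th syzygy $K:=\mathrm{Ker}(I^d\to I^{d+1})$ gives $K\in\mathcal{I}$, so $0\to A\to I^0\to\cdots\to I^{d-1}\to K\to 0$ is a finite $\mathcal{I}$-resolution. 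Thus the $\mathcal{I}$-dimension of $\mathcal{A}$ is bounded by $d$.

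Second, given $X^\bullet\in\text{C}(\mathcal{A})$, I would build a double complex $I^{\bullet,\bullet}$ of Cartan--Eilenberg type: with $I^{p,q}\in\mathcal{I}$, concentrated in vertical degrees $0\le q\le d$, such that each column $I^{p,\bullet}$ is an $\mathcal{I}$-resolution of $X^p$ and the induced complexes along rows of cocycles, boundaries, and cohomologies resolve their analogues in $X^\bullet$. The existence follows from the standard horseshoe argument applied to the short exact sequences $0\to B^p(X)\to Z^p(X)\to H^p(X)\to 0$ and $0\to Z^p(X)\to X^p\to B^{p+1}(X)\to 0$, with the finite dimension bound from the first step ensuring truncability at row $d$. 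Totalizing via $Y^n:=\bigoplus_{p+q=n}I^{p,q}$ gives a finite direct sum (since $q$ ranges over $\{0,\ldots,d\}$), hence $Y^n\in\mathcal{I}$ and $Y^\bullet\in\text{C}(\mathcal{I})$. The augmentation $X^\bullet\to Y^\bullet$ is a quasi-isomorphism via the spectral sequence of the vertical filtration, which has only $d+1$ nonzero rows and therefore converges and degenerates onto the single row computing $H^\bullet(X^\bullet)$.

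Third, the triangulated equivalence $\mathcal{K}(\mathcal{I})/\mathcal{K}_{ac}^{\mathcal{A}}(\mathcal{I})\stackrel{\cong}{\longrightarrow}\mathcal{D}(\mathcal{A})$ then follows formally. The canonical functor $\mathcal{K}(\mathcal{I})\to\mathcal{D}(\mathcal{A})$ sends $\mathcal{A}$-acyclic complexes to zero, hence factors through the Verdier quotient; essential surjectivity is immediate from the resolution constructed above, and full faithfulness follows by calculus of fractions, since any roof of quasi-isomorphisms with apex in $\text{C}(\mathcal{A})$ may be refined to one with apex in $\mathcal{K}(\mathcal{I})$ by replacing the apex with its $\mathcal{I}$-resolution, while vanishing in $\mathcal{D}(\mathcal{A})$ is witnessed by factorization through an $\mathcal{A}$-acyclic $\mathcal{I}$-complex. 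The main obstacle is precisely the unbounded totalization step: for a general Cartan--Eilenberg resolution of an unbounded complex the direct-sum total complex need not compute the original in the derived category, because the bicomplex spectral sequence may fail to converge. Hypothesis (ii) is exactly what rescues the construction, bounding the vertical extent by $d$ and ensuring simultaneously that each $Y^n$ lies in $\mathcal{I}$ and that the spectral sequence collapses after finitely many pages.
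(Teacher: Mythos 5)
First, note that the paper does not prove this statement at all: it is quoted verbatim from Kashiwara--Schapira \cite[Proposition~13.2.6]{KaSc}, whose own argument proceeds quite differently from yours, namely by resolving the brutal truncations $\sigma^{\geq -n}X$ via Lemma~\ref{ks1} and using hypothesis (ii) to arrange that each successive modification changes the complex only in a window of width $d$, so that the resolutions stabilize degreewise and one can pass to the (trivially existing) limit. Your route --- a vertically bounded double complex of $\mathcal{I}$-resolutions, totalized by finite direct sums, with acyclicity of the cone coming from exact columns plus finitely many rows --- is a genuinely different and viable strategy, and you correctly identify both why the bounded-row condition makes the totalization legitimate (each $Y^n$ is a \emph{finite} sum of objects of $\mathcal{I}$, and the column filtration is finite in each total degree, so convergence is not an issue) and how (i) and (ii) combine to give every object a finite $\mathcal{I}$-coresolution of length $\leq d$.

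There is, however, one step that fails as written: the construction of the double complex via ``the standard horseshoe argument'' applied to $0\to B^p\to Z^p\to H^p\to 0$ and $0\to Z^p\to X^p\to B^{p+1}\to 0$. The (dual) horseshoe lemma requires extending a monomorphism $A'\to I'^0$ along $A'\hookrightarrow A$, i.e.\ it uses injectivity of the objects of $\mathcal{I}$; a cogenerating additive subcategory need not have this extension property, so the Cartan--Eilenberg resolution (and in particular the requirement that the rows of cocycles, boundaries and cohomologies be resolved) is not available here. The gap is repairable, and the repair shows the full Cartan--Eilenberg structure is unnecessary: it suffices to coresolve $X^\bullet$ \emph{in the category of complexes}. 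For any complex $C^\bullet$ choose monomorphisms $f^p\colon C^p\to J^p$ with $J^p\in\mathcal{I}$ and embed $C^\bullet$ into the complex with $n$-th term $J^n\oplus J^{n+1}$, differential $(x,y)\mapsto (y,0)$ and embedding $c\mapsto (f^n(c),f^{n+1}(dc))$; this is a degreewise monomorphism of complexes into a complex with entries in $\mathcal{I}$ (using only that $\mathcal{I}$ is additive). Iterating and truncating after $d$ steps --- legitimate because, degreewise, your Step~1 puts the $d$-th cosyzygy in $\mathcal{I}$ --- yields an exact sequence of complexes $0\to X^\bullet\to I_0^\bullet\to\cdots\to I_{d}^\bullet\to 0$ with all $I_q^\bullet\in \text{C}(\mathcal{I})$, i.e.\ a double complex with exact columns and $d+1$ rows, to which the rest of your argument applies verbatim. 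Your Step~3 is the standard Verdier/cofinality argument and is fine.
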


\begin{lemma}\label{tiltvscotitl}
If the class $\mathcal{Y}=\mathcal{A}\cap\mathcal{H}_\tau$ is cogenerating
in $\mathcal{A}$, then
$\mathcal{Y}$ is generating in $\mathcal{H}_\tau$.
\end{lemma}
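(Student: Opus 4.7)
Given $M \in \mathcal{H}_\tau$, the plan is to exhibit an object $Y \in \mathcal{Y}$ together with a morphism $f: Y \to M$ whose cone lies in $\mathcal{H}_\tau[1]$, which makes $f$ an epimorphism in $\mathcal{H}_\tau$. I start by invoking Lemma~\ref{ks1}: since $\mathcal{Y}$ is cogenerating in $\mathcal{A}$ and the setup gives $\mathcal{H}_\tau \subseteq \mathcal{D}(\mathcal{A})^{\geq -n}$, I obtain a quasi-isomorphism $M \to Z^\bullet$ with $Z^\bullet \in C^{\geq -n}(\mathcal{Y})$.

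Next I pass to a doubly-bounded complex with all terms in $\mathcal{Y}$. The smart truncation $W^\bullet := \tau_\delta^{\leq 0} Z^\bullet$ is a bounded complex $[Z^{-n} \to \cdots \to Z^{-1} \to \ker(d^0)]$ in degrees $[-n,0]$ quasi-isomorphic to $M$; all its terms except possibly $\ker(d^0)$ already lie in $\mathcal{Y}$. Using cogeneration once more, I embed $\ker(d^0) \hookrightarrow Y$ with $Y \in \mathcal{Y}$, producing an enlarged complex $\tilde W^\bullet$ with every term in $\mathcal{Y}$. By Lemma~\ref{cohom}, $\tilde W^\bullet$ lies in both $\delta^{[-n,0]}$ and $\tau^{[-n,0]}$. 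The induced short exact sequence of complexes $0 \to W^\bullet \to \tilde W^\bullet \to Q \to 0$ (with $Q = Y/\ker(d^0)$ concentrated in degree zero) yields a distinguished triangle $M \to \tilde W^\bullet \to Q \stackrel{+}{\to}$ in $\mathcal{D}(\mathcal{A})$. Analyzing the long exact sequence in $\tau$-cohomology — using $\mathcal{A} \subseteq \tau^{\geq 0}$ (a consequence of $\mathcal{U} \subseteq \delta^{\leq 0}$), $H^i_\tau(M) = 0$ for $i \neq 0$, and the Lemma~\ref{cohom} bound $H^i_\tau(\tilde W^\bullet) = 0$ for $i > 0$ — forces $Q$ to have $\tau$-cohomology only in degree zero, whence $Q \in \mathcal{Y}$, and also $\tilde W^\bullet \in \mathcal{H}_\tau$.

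The main obstacle is the final extraction. My construction naturally produces a monomorphism $M \hookrightarrow \tilde W^\bullet$ in $\mathcal{H}_\tau$ with quotient $Q \in \mathcal{Y}$ — a partial cogeneration — rather than the generation by $\mathcal{Y}$ that the lemma asserts. Closing this gap is the crux: Lemma~\ref{ks1} under a cogenerating hypothesis naturally outputs right (injective-style) resolutions, while generation calls for left (projective-style) covers. I anticipate completing the argument either by an iterative procedure that contracts the $\delta$-spread of the covering object using repeated applications of the construction until it collapses into $\mathcal{Y}$ itself, or by combining the symmetric conclusions of Lemma~\ref{lemma0}(1) and (2) with Proposition~\ref{acyclic} — which matches $\mathcal{A}$-acyclicity and $\mathcal{H}_\tau$-acyclicity of $\mathcal{Y}$-complexes — to build a left resolution of $M$ in $\mathcal{H}_\tau$ directly from the right resolution in $\mathcal{A}$. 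Making this direction-switch precise is where I expect to spend the most effort.
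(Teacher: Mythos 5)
There is a genuine gap here, and you have identified it yourself: your construction ends with a monomorphism $M\hookrightarrow \tilde W^\bullet$ in $\mathcal{H}_\tau$ whose cokernel lies in $\mathcal{Y}$, which is a cogeneration-type statement, not the generation asserted by the lemma. The two repairs you sketch (an iteration that ``contracts the $\delta$-spread'', or some combination of Lemma~\ref{lemma0} with Proposition~\ref{acyclic}) are not carried out and neither is obviously workable, so the proof is incomplete as it stands.

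The frustrating part is that you are one small move away from the paper's argument, and the wrong turn happens exactly when you enlarge $\ker(d^0)$. Keep your complex $W^\bullet=[Z^{-n}\to\cdots\to Z^{-1}\to A]$ with $A:=\ker(d^0)$ in degree $0$, quasi-isomorphic to $M$; this is precisely the paper's starting point. Instead of embedding $A$ into an object of $\mathcal{Y}$, use the \emph{brutal} truncation in the other direction: $A[0]$ is a subcomplex of $W^\bullet$ (the differential out of degree $0$ vanishes), with quotient complex $\sigma^{\leq -1}W^\bullet=[Z^{-n}\to\cdots\to Z^{-1}]$. Setting $C:=(\sigma^{\leq -1}W^\bullet)[-1]$, which has all terms in $\mathcal{Y}$, one gets a distinguished triangle $C\to A[0]\to M\stackrel{+}{\to}$. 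Lemma~\ref{cohom} gives $C\in\delta^{[-n+1,0]}\cap\tau^{[-n+1,0]}\subseteq\mathcal{U}$; since $M\in\mathcal{U}$ and $\mathcal{U}$ is closed under extensions, $A[0]\in\mathcal{U}$, hence $A\in\mathcal{A}\cap\mathcal{U}=\mathcal{Y}$ (using $\mathcal{A}\subseteq\mathcal{U}^\perp[1]$, which you already noted). Then $A[0]$ and $M[-1]$ both lie in $\mathcal{U}^\perp[1]$, so $C\in\mathcal{U}^\perp[1]\cap\mathcal{U}=\mathcal{H}_\tau$, and the triangle becomes a short exact sequence $0\to C\to A\to M\to 0$ in $\mathcal{H}_\tau$: the epimorphism $A\to M$ from an object of $\mathcal{Y}$ that you need. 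No iteration and no appeal to Proposition~\ref{acyclic} is required; the direction-switch you were worried about is handled entirely by the fact that the degree-zero piece of a complex concentrated in degrees $[-n,0]$ is a subobject in $\mathcal{A}$ but maps onto the complex in the new heart.
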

\begin{proof}
Let us suppose that $\mathcal{Y}$ cogenerates $\mathcal{A}$ and let us prove that 
$\mathcal{Y}$ generates $\mathcal{H}_\tau$.
Let $B \in \mathcal{H}_\tau$. First we show that we can represent the object $B$ as a complex
\[Y^\bullet:=
\cdots \to 0 \to Y^{-n} \stackrel{d^{-n}}\to Y^{-n+1}\stackrel{d^{-n+1}}\to\cdots 
\stackrel{d^{-2}}\to Y^{-1} \stackrel{f}\to A \to 0\to \cdots
\]
whose terms  $Y_i$ lie in $\mathcal{Y}$ for any $-n\leq i\leq -1$, $A\in\mathcal{A}$ and it is placed in degree zero. Indeed, since $\mathcal{H}_\tau\subseteq \delta^{[-n, 0]}$, Lemma~\ref{ks1} gives that $B$ is quasi-isomorphic to a complex 
$...\rightarrow 0\rightarrow Y^{-n}\rightarrow ...\rightarrow Y^0\rightarrow Y^1\rightarrow ...$, with $Y^i$ in $\mathcal{Y}$ for all $i\geq -n$. Then using the classical truncation at $0$ by taking $A={\rm Ker}_\mathcal{A}(Y^0\rightarrow Y^1)$, gives a complex 
$... \rightarrow 0\rightarrow Y^{-n}\rightarrow ...\rightarrow Y^{-1}\rightarrow A\rightarrow 0 \rightarrow...$, which is clearly quasi-isomorphic to $Y^\bullet$, and hence isomorphic to $B$ in $\mathcal{D}(\mathcal{A})$.

Now let us consider the following distinguished triangle in $\mathcal{D}(\mathcal{A})$ (see on the left in the vertical sense the distinguished triangle):
\[
\xymatrix{
C\ar[d] & 0\ar[r]\ar[d] & Y^{-n}\ar[r]\ar[d]	& Y^{-n+1}\ar[r]\ar[d]		&	\cdots\ar[r]\ar[d]&
Y^{-2}\ar[r]\ar[d] &
Y^{-1}\ar[d]^{f} \\
A[0]\ar[d] &0\ar[r]\ar[d] & 0\ar[r]\ar[d] & 0\ar[r] \ar[d] &0\ar[r] \ar[d] &  0\ar[r] \ar[d] &A\ar[d]\\
B &  Y^{-n}\ar[r] & Y^{-n+1}\ar[r]&	\cdots\ar[r] & \cdots\ar[r] &Y^{-1}\ar[r] &A. \\
}
\]
By Lemma~\ref{cohom}, we have $C\in\delta^{[-n,0]}\cap\tau^{[-n,0]}$.
Since $B\in\mathcal{H}_\tau$ we deduce that $A\in \mathcal{A}\cap\mathcal{U}=\mathcal{Y}$.
Now $A,B\in\mathcal{H}_\tau$ hence $C\in\mathcal{U}^\perp[1]\cap\tau^{[-n,0]}=\mathcal{H}_\tau$.
We can conclude that $\mathcal{Y}$ generates $\mathcal{H}_\tau$  since for any $B\in\mathcal{H}_\tau$ we have found
$A\in\mathcal{Y}$ and the previous distinguished triangle proves that
$0\to C \to A\to B\to 0$ is an exact sequence in $\mathcal{H}_\tau$.
\end{proof}


\begin{theorem}[Tilting Theorem] \label{teor.tilting derived equivalence}
Let $\mathcal{A}$ be an abelian category such that $\mathcal{D}(\mathcal{A})$ has $Hom$ sets and let $\tau =(\mathcal{U},\mathcal{U}^\perp [1])$  be a t-structure in $\mathcal{D}(\mathcal{A})$ such that $\mathcal{D}^{\leq -n}(\mathcal{A})\subseteq\mathcal{U}\subseteq\mathcal{D}^{\leq 0}(\mathcal{A})$, for some $n\in\mathbb{N}$,  and let $\mathcal{H}_\tau$ be its heart. If  the class $\mathcal{Y}=\mathcal{A}\cap\mathcal{H}_\tau$ cogenerates $\mathcal{A}$, then there is a triangulated equivalence 
\[
\mathcal{D}(\mathcal{H}_\tau)\stackrel{\cong}{\longrightarrow}\mathcal{D}(\mathcal{A})
\]
which is compatible with the inclusion functor $\mathcal{H}_\tau\hookrightarrow\mathcal{D}(\mathcal{A})$.
\end{theorem}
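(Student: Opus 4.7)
The plan is to realize both $\mathcal{D}(\mathcal{A})$ and $\mathcal{D}(\mathcal{H}_\tau)$ as the same Verdier quotient of the homotopy category $K(\mathcal{Y})$, using Proposition~\ref{ks2} on the $\mathcal{A}$-side and its dual on the $\mathcal{H}_\tau$-side, then to glue them via Proposition~\ref{acyclic}, which identifies the two notions of acyclicity on $K(\mathcal{Y})$.

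Concretely, I would first apply Proposition~\ref{ks2} to $\mathcal{Y}\subseteq\mathcal{A}$: cogeneration is the standing hypothesis; to verify condition~(ii) with $d=n+1$, take any exact sequence $Y^{n+1}\to\cdots\to Y^1\to Y\to 0$ in $\mathcal{A}$ with $Y^j\in\mathcal{Y}$, and reindex its leftmost $n+1$ terms as $Y^{-n}\to\cdots\to Y^0$ to match the hypothesis of Lemma~\ref{lemma0}(2), which forces $Y=\Coker_\mathcal{A}(Y^2\to Y^1)\in\mathcal{Y}$. Proposition~\ref{ks2} then yields a triangulated equivalence
\[
\bar G\colon K(\mathcal{Y})/\mathcal{K}_{ac}^{\mathcal{A}}(\mathcal{Y})\xrightarrow{\;\cong\;}\mathcal{D}(\mathcal{A}),
\]
induced by the natural composite $K(\mathcal{Y})\hookrightarrow K(\mathcal{A})\to\mathcal{D}(\mathcal{A})$. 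Symmetrically, Lemma~\ref{tiltvscotitl} provides the generating hypothesis for $\mathcal{Y}\subseteq\mathcal{H}_\tau$, and the dual of condition~(ii) with the same $d=n+1$ is Lemma~\ref{lemma0}(1) after reindexing: an exact sequence $0\to Y\to Y^1\to\cdots\to Y^{n+1}$ in $\mathcal{H}_\tau$ with $Y^j\in\mathcal{Y}$ forces $Y\in\mathcal{Y}$. Hence the dual of Proposition~\ref{ks2} delivers
\[
\bar F\colon K(\mathcal{Y})/\mathcal{K}_{ac}^{\mathcal{H}_\tau}(\mathcal{Y})\xrightarrow{\;\cong\;}\mathcal{D}(\mathcal{H}_\tau).
\]
Since Proposition~\ref{acyclic} gives $\mathcal{K}_{ac}^{\mathcal{A}}(\mathcal{Y})=\mathcal{K}_{ac}^{\mathcal{H}_\tau}(\mathcal{Y})$, the functors $\bar F$ and $\bar G$ share their source, so the composite $\Phi:=\bar G\circ\bar F^{-1}\colon\mathcal{D}(\mathcal{H}_\tau)\to\mathcal{D}(\mathcal{A})$ is the desired triangulated equivalence.

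For the compatibility with $\mathcal{H}_\tau\hookrightarrow\mathcal{D}(\mathcal{A})$, given $B\in\mathcal{H}_\tau$ I would iterate the construction of Lemma~\ref{tiltvscotitl} to obtain short exact sequences $0\to B_{k+1}\to A_k\to B_k\to 0$ in $\mathcal{H}_\tau$ (with $A_k\in\mathcal{Y}$ and $B_0=B$), assembled into a complex $Y^\bullet=(\cdots\to A_1\to A_0)$ representing $\bar F^{-1}(B)$. Each short exact sequence is a triangle in $\mathcal{D}(\mathcal{A})$, and an iterated octahedral argument produces triangles $B_k[k-1]\to T_k\to B\xrightarrow{+}$ in $\mathcal{D}(\mathcal{A})$, where $T_k=[A_{k-1}\to\cdots\to A_0]$ is the brutal truncation of $Y^\bullet$ and the morphisms $T_k\to B$ are compatible with the inclusions $T_{k-1}\hookrightarrow T_k$. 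The boundedness $\mathcal{H}_\tau\subseteq\mathcal{D}^{[-n,0]}(\mathcal{A})$ forces $B_k[k-1]$ to have $\mathcal{A}$-cohomology concentrated in degrees $[-n-k+1,-k+1]$, so at each fixed cohomological degree the map $T_k\to B$ becomes an isomorphism on $\mathcal{A}$-cohomology for $k$ large, and since $T_k$ agrees with $Y^\bullet$ on any bounded range for $k$ large, the resulting morphism $Y^\bullet\to B$ in $\mathcal{D}(\mathcal{A})$ is a quasi-isomorphism, yielding $\Phi(B)\cong B$ naturally in $B$.

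I expect the main obstacle to lie in this last compatibility step: the two invocations of Proposition~\ref{ks2} are mechanical once Lemma~\ref{lemma0} is in hand, and Proposition~\ref{acyclic} immediately merges the two acyclic subcategories. The subtle point is producing, naturally in $B$, an isomorphism $\Phi(B)\cong B$ in $\mathcal{D}(\mathcal{A})$; for this both Proposition~\ref{acyclic} (to transfer $\mathcal{H}_\tau$-quasi-isomorphisms in $K(\mathcal{Y})$ into $\mathcal{A}$-quasi-isomorphisms) and the $t$-structure boundedness (to drive the stabilization of the truncations $T_k$ toward $Y^\bullet$) are essential.
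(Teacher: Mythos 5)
Your proposal is correct and follows essentially the same route as the paper: both $\mathcal{D}(\mathcal{A})$ and $\mathcal{D}(\mathcal{H}_\tau)$ are realized as $\mathcal{K}(\mathcal{Y})/\mathcal{K}_{ac}(\mathcal{Y})$ via Proposition~\ref{ks2} and its dual (with the two parts of Lemma~\ref{lemma0} supplying hypothesis (ii) on each side --- and you attach them to the correct sides --- and Lemma~\ref{tiltvscotitl} supplying generation in $\mathcal{H}_\tau$), after which Proposition~\ref{acyclic} identifies the two acyclic subcategories and the equivalence is the resulting composite. Your compatibility argument is more detailed than the paper's, which for $X^\bullet\in\mathcal{H}_\tau$ simply takes a $\mathcal{Y}$-resolution $Y^\bullet\to X^\bullet$ and concludes $F(X^\bullet)\cong Y^\bullet$ in $\mathcal{D}(\mathcal{A})$, leaving the identification $Y^\bullet\cong X^\bullet$ in $\mathcal{D}(\mathcal{A})$ implicit; your truncation/octahedron argument supplies exactly that step.
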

\begin{proof}
By assumption, the full subcategory $\mathcal{Y}$ is cogenerating in $\mathcal{A}$ and by Lemma~\ref{lemma0},1) $\mathcal{Y}$ satisfies assumption (ii) of Proposition~\ref{ks2}. 
Therefore 
we can apply the latter result, which gives that the canonical composition of functors 
$\mathcal{K}(\mathcal{Y}) \xhookrightarrow{} \mathcal{K}(\mathcal{A}) \xhookrightarrow{} \mathcal{D}(\mathcal{A})$ induces a triangulated equivalence
$\mathcal{K}(\mathcal{Y})/\mathcal{K}_{ac}^{\mathcal{A}}(\mathcal{Y}) \stackrel{\cong}\to \mathcal{D}(\mathcal{A})$. On the other hand, by Lemma~\ref{tiltvscotitl} the full subcategory $\mathcal{Y}$ is generating in $\mathcal{H}_\tau$ and by Lemma~\ref{lemma0}, 2) 
$\mathcal{Y}$ satisfies the assumptions of the dual of Proposition~\ref{ks2}. 
Therefore by the dual of 
Proposition~\ref{ks2}, the canonical composition of functors 
$\mathcal{K}(\mathcal{Y}) \xhookrightarrow{} \mathcal{K}(\mathcal{H}_\tau) \xhookrightarrow{} \mathcal{D}(\mathcal{H}_\tau)$ induces a triangulated equivalence 
$\mathcal{K}(\mathcal{Y})/\mathcal{K}_{ac}^{\mathcal{H}_\tau}(\mathcal{Y}) \stackrel{\cong}\to \mathcal{D}(\mathcal{H}_\tau)$. By Proposition~\ref{acyclic}, $\mathcal{K}_{ac}^{\mathcal{A}}(\mathcal{Y})=\mathcal{K}_{ac}^{\mathcal{H}_\tau}(\mathcal{Y})=:\mathcal{K}_{ac}{(\mathcal{Y})}$. Hence we obtain the following diagram
\[
\xymatrix{&\mathcal{K}(\mathcal{Y})/\mathcal{K}_{ac}(\mathcal{Y}) \ar[dl]_\cong \ar[dr]^\cong &\\
\mathcal{D}(\mathcal{H}_\tau) \ar^{\cong}[rr]&& \mathcal{D}(\mathcal{A})}
\]

Finally, consider any object $X^\bullet$ of $\mathcal{H}_\tau$ and denote by $F$ the just defined equivalence $\mathcal{D}(\mathcal{H}_\tau)\stackrel{\cong}{\longrightarrow}\mathcal{D}(\mathcal{A})$. By the dual of Proposition \ref{ks2}, we have a quasi-isomorphism $Y^\bullet\longrightarrow X^\bullet$ in $\mathcal{K}(\mathcal{H}_\tau)$, whence an isomorphism in $\mathcal{D}(\mathcal{H}_\tau)$, where $Y^\bullet\in\mathcal{K}(\mathcal{Y})$. By the above commutative diagram, we have an isomorphism $F(X^\bullet )\cong F(Y^\bullet )\cong Y^\bullet$ in $\mathcal{D}(\mathcal{A})$, which shows the compatibility of $F$ with the inclusion functor $\mathcal{H}_\tau\hookrightarrow\mathcal{D}(\mathcal{A})$. 
\end{proof}

\begin{remark}\label{dualtilt}
Lemma~\ref{tiltvscotitl} can be dualized in the following way: let $\mathcal{A}$ be an abelian category such that $\mathcal{D}(\mathcal{A})$ has $Hom$ sets and let $\tau =(\mathcal{U},\mathcal{U}^\perp [1])$  be a $t$-structure in $\mathcal{D}(\mathcal{A})$ such that $\mathcal{D}^{\leq 0}(\mathcal{A})\subseteq\mathcal{U}\subseteq\mathcal{D}^{\leq n}(\mathcal{A})$, for some $n\in\mathbb{N}$,  and let $\mathcal{H}_\tau$ be its heart. If  the class $\mathcal{Y}=\mathcal{A}\cap\mathcal{H}_\tau$ is generating in $\mathcal{A}$, then $\mathcal Y$ is cogenerating in $\mathcal{H}_\tau$. The proof of this statement follows by using the dual arguments of those presented in the proof of Lemma~\ref{tiltvscotitl}. Under these assumptions, Theorem~\ref{teor.tilting derived equivalence} yields a triangle equivalence 
$\mathcal{D}(\mathcal{H}_\tau )\stackrel{\cong}{\longrightarrow}\mathcal{D}(\mathcal{A})$ which is compatible with the inclusion 
$\mathcal{H}_\tau\hookrightarrow\mathcal{D}(\mathcal{A})$. 
\end{remark}

\section{Applications to nonclassical tilting objects}

In this section we apply the Tilting Theorem~\ref{teor.tilting derived equivalence} to the case of nonclassical tilting objects.

The following definition appears in~\cite{NSZM}. 

\begin{definition}\cite{NSZM}\label{ntilting}
Let $\mathcal{A}$ be an abelian category whose derived category has 
$Hom$ sets and arbitrary (small) coproducts.
Let $n$ be a natural number. An object $T$ of $\mathcal{A}$ is called {\it $n$-tilting} when the following conditions hold:
\begin{enumerate}
\item[\rm T0] For each set $I$, the object $T^{(I)}$ is the coproduct of $I$ copies of $T$ both in $\mathcal{A}$ and 
$\mathcal{D}(\mathcal{A})$;
\item[\rm T1] $\Ext_\mathcal{A}^k(T,T^{(I)})=0$, for all integers $k>0$ and all sets I;
\item[\rm T2] The projective dimension of $T$ is $\leq n$, that is, $\Ext_\mathcal{A}^k(T,N)=0$ for all integers $k>n$ and all objects $N$ of 
$\mathcal{A}$;
\item[\rm T3] There is a generating class $\mathcal{G}$ of $\mathcal{A}$ such that, for each $G \in \mathcal{G}$, there is an exact sequence
$0\to G\to T^0\to T^1\to \dots \to T^n\to 0$, where all the $T^k$ are in $\Add(T)$.
\end{enumerate}
$T$ is called a {\it tilting} object when it is $n$-tilting, for some natural number $n$.  A {\it classical ($n$-)tilting} object is a ($n$-)tilting object which is compact as an object of $\mathcal{D}(\mathcal{A})$. 
\end{definition} 

As proved in \cite{NSZM}, if $\mathcal{A}$ be an abelian category whose derived category has 
$Hom$ sets and arbitrary (small) coproducts, as in Definition~\ref{ntilting}, then $\mathcal{A}$ is automatically cocomplete.
When $\mathcal{A}$ is an abelian category whose derived category has $Hom$ sets and arbitrary (small) products, the dual concepts of {\it $n$-cotilting}, {\it cotilting} and {\it classical ($n$-)cotilting} object appear naturally.

Recall from~\cite{PV} that a {\it silting} object in $\mathcal{D}(\mathcal{A})$ is a complex $T^\bullet$ such that the pair 
$({T^\bullet}^{\perp_{>0}}, {T^\bullet}^{\perp_{<0}})$ is a $t$-structure in $\mathcal{D}(\mathcal{A})$ and $T^\bullet\in {T^\bullet }^{\perp_{>0}}$. Here ${T^\bullet}^{\perp_{>0}}$ (resp. ${T^\bullet}^{\perp_{<0}}$) is the full subcategory of $\mathcal{D}(\mathcal{A})$ consisting of those $Y^\bullet$ such that $\text{Hom}_{\mathcal{D}(\mathcal{A})}(T^\bullet ,Y^\bullet [k])=0$, for all $k>0$ (resp. $k<0$).

 In the next result, when $m\in\mathbb{N}$, the subcategory of objects $Y$ of $\mathcal{A}$ which admit an exact sequence $T^{-m}\longrightarrow...\longrightarrow T^{-1}\longrightarrow T^0\longrightarrow Y\rightarrow 0$, with all the $T^{-k}$ in $\text{Add}(T)$, will be denoted by $\text{Pres}^m(T)$.

\begin{theorem}{\rm\cite{NSZM}}
Let $\mathcal{A}$ be an abelian category such that its derived category $\mathcal{D}(\mathcal{A})$ has $Hom$ sets and arbitrary (small) coproducts,  and let $T$ be an object of $\mathcal{A}$ such that the coproduct $T^{(I)}$ is the same in $\mathcal{A}$ and in 
$\mathcal{D}(\mathcal{A})$. The following are equivalent:
\begin{enumerate}
\item $T$ is a tilting object in $\mathcal{A}$.
\item $T$ has finite projective dimension and is a silting object of $\mathcal{D}(\mathcal{A})$.
\item If $\mathcal{Y}=\bigcap_{k>0}{\rm Ker}(\Ext^k_\mathcal{A}(T,?))$, then:
\begin{enumerate}
\item[\rm (a)]$\mathcal{Y}={\rm Pres}^m(T)$, for some integer $m \in \mathbb{N}$;
\item[\rm (b)]
$\mathcal{Y}$ is a cogenerating class of $\mathcal{A}$.
\end{enumerate}
\end{enumerate}
Moreover, when one of the previous assertion holds, the pair $\tau_T =(T^{\perp_{>0}},T^{\perp_{>0}})$ is a t-structure in $\mathcal{D}(\mathcal{A})$ and $\mathcal{D}(\mathcal{A})^{\leq-n}\subseteq T^{\perp_{>0}} \subseteq \mathcal{D}(\mathcal{A})^{\leq0}$. 
\end{theorem}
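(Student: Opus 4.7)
The plan is to establish the three conditions equivalent cyclically, $(1) \Rightarrow (2) \Rightarrow (3) \Rightarrow (1)$, and then deduce the ``moreover'' statement from the silting characterization in (2). A preliminary observation, useful throughout, is that the hypothesis on coproducts gives $\Ext^k_\mathcal{A}(T,Y) \cong \Hom_{\mathcal{D}(\mathcal{A})}(T,Y[k])$ for every $Y\in\mathcal{A}$ and $k\geq 0$, so $\mathcal{Y} = T^{\perp_{>0}}\cap\mathcal{A}$ in condition (3). Also, T1 together with T0 extends to show $\Hom_{\mathcal{D}(\mathcal{A})}(T,T^{(I)}[k])=0$ for $k>0$, so $T\in T^{\perp_{>0}}$.

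For $(1) \Rightarrow (2)$, condition T2 is exactly the finite projective dimension statement. The bulk of the work is to show that $\tau_T=(T^{\perp_{>0}},T^{\perp_{<0}})$ is a $t$-structure. Closure of $T^{\perp_{>0}}$ under the suspension is immediate, and the vanishing axiom is essentially formal. The essential step is to build approximation triangles $U\to X\to V\xrightarrow{+}$ with $U\in T^{\perp_{>0}}$ for every $X\in\mathcal{D}(\mathcal{A})$: the strategy is to start from an $\Add(T)$-precover of the cohomology (whose existence follows by applying T3 termwise, using that coproducts of objects in $\mathcal{G}$ still admit $\Add(T)$-coresolutions of length $n$), iteratively take cones to kill positive $\Hom_{\mathcal{D}(\mathcal{A})}(T,?)$-groups, and use the finite projective dimension T2 to guarantee termination of the process after $n$ steps.

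For $(2) \Rightarrow (3)$, the silting plus projective-dimension-$\leq n$ hypothesis first yields $\mathcal{D}(\mathcal{A})^{\leq -n}\subseteq T^{\perp_{>0}}\subseteq \mathcal{D}(\mathcal{A})^{\leq 0}$, and hence $\mathcal{Y} = T^{\perp_{>0}}\cap\mathcal{A}$ sits in the heart of $\tau_T$. For (a), given $Y\in\mathcal{Y}$ one produces an $\Add(T)$-precover $T^0\twoheadrightarrow Y$, takes the kernel, iterates; after at most $n$ steps the kernel lies in $T^{\perp_{>0}}\cap \mathcal{D}(\mathcal{A})^{\leq -n}$, which inspection shows forces it into $\Add(T)$, so $\mathcal{Y}=\text{Pres}^n(T)$. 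For (b), given $X\in\mathcal{A}$ one considers the truncation triangle for $X[0]$ with respect to $\tau_T$ shifted by $-n$; its first term yields an embedding of $X$ into an object of $\mathcal{Y}$.

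For $(3) \Rightarrow (1)$, T0 is the standing hypothesis, T1 is immediate from $T\in\Add(T)\subseteq\text{Pres}^m(T)=\mathcal{Y}$, and T3 is obtained by combining (b) (every object of $\mathcal{A}$ embeds in a $Y\in\mathcal{Y}$) with the $\text{Add}(T)$-coresolution of $Y$ provided by (a), passing from $X$ to the cokernel of $X\hookrightarrow Y$ in finitely many steps (which again is in $\mathcal{Y}$ by an argument analogous to Lemma~\ref{lemma0}). T2 is deduced from a dimension-shift argument bounding $\Ext^k_\mathcal{A}(T,?)$ via these coresolutions. The final ``moreover'' claim that $\tau_T$ is a $t$-structure with $\mathcal{D}(\mathcal{A})^{\leq -n}\subseteq T^{\perp_{>0}}\subseteq \mathcal{D}(\mathcal{A})^{\leq 0}$ now follows from (2) plus the projective dimension bound. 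The main obstacle is the construction of the silting $t$-structure in $(1)\Rightarrow (2)$: passing from the pointwise coresolutions provided by T3 to global approximation triangles in $\mathcal{D}(\mathcal{A})$ requires a transfinite cellular construction whose convergence depends critically on the interaction of T0 (coproducts coincide) with T2 (bounded projective dimension).
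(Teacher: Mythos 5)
This statement is imported verbatim from \cite{NSZM}: the paper offers no proof of it (it is used as a black box to feed Theorem~\ref{teor.tilting derived equivalence}), so there is no internal argument to compare yours against; I can only judge your sketch on its own terms. As a roadmap it points in the right general direction, but it is an outline rather than a proof, and the steps it defers are precisely the hard ones. The construction of the t-structure $(T^{\perp_{>0}},T^{\perp_{<0}})$ in $(1)\Rightarrow(2)$ --- which you yourself flag as ``the main obstacle'' --- is left entirely open; in \cite{NSZM} this occupies the bulk of the work and goes through Milnor (homotopy) colimits of iterated $\Add(T)$-approximations, with T0, T1 and T2 all needed to make the colimit compute the truncation. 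Naming the obstacle is not the same as overcoming it.

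Two of the steps you do describe are incorrect or incomplete as stated. In $(2)\Rightarrow(3)$(a) you assert an $\Add(T)$-precover $T^0\twoheadrightarrow Y$ for $Y\in\mathcal{Y}$; the canonical map $T^{(\Hom(T,Y))}\to Y$ is a $\Hom(T,-)$-precover, but its surjectivity \emph{in $\mathcal{A}$} is not formal --- it uses that $T$ is a projective generator of the heart and that for objects of $\mathcal{Y}=\mathcal{A}\cap\mathcal{H}_{\tau_T}$ the two ambient exact structures can be compared. Moreover, the $n$-th kernel is an object of $\mathcal{A}$, hence concentrated in degree $0$, so it cannot lie in $\mathcal{D}(\mathcal{A})^{\leq -n}$ unless it vanishes; the claim that it is ``forced into $\Add(T)$'' is both unjustified and unnecessary for $\mathcal{Y}=\mathrm{Pres}^m(T)$. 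In $(3)\Rightarrow(1)$, condition T3 demands $\Add(T)$-\emph{coresolutions} $0\to G\to T^0\to\cdots\to T^n\to 0$, whereas (a) supplies $\Add(T)$-\emph{presentations} $T^{-m}\to\cdots\to T^0\to Y\to 0$; combining (b) with (a) as you describe yields coresolutions of $X$ by objects of $\mathcal{Y}$, not of $\Add(T)$, so an additional idea (a suitable choice of the generating class $\mathcal{G}$) is still missing. For the actual argument you should consult \cite{NSZM} directly.
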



The main application of the Tilting Theorem~\ref{teor.tilting derived equivalence} 
is the following proposition, which generalizes the derived equivalence induced by a classical tilting object (at the level of unbounded derived categories) to the non-classical case. 

\begin{proposition} \label{cor.tilting object Grothendieck category}
Let  $\mathcal{A}$ be an abelian category whose derived category has $Hom$ sets and arbitrary (small) coproducts, let $T$ be a tilting object in $\mathcal{A}$ 
and let $\mathcal{H}_T$ be the heart of the associated t-structure $\tau_T=(T^{\perp_{>0}},T^{\perp_{<0}})$ in $\mathcal{D}(\mathcal{A})$. Then there is a triangulated equivalence $\mathcal{D}(\mathcal{H}_T)\stackrel{\cong}{\longrightarrow}\mathcal{D}(\mathcal{A})$ which is compatible with the inclusion functor $\mathcal{H}_T\hookrightarrow\mathcal{D}(\mathcal{A})$. 
\end{proposition}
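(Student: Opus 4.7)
The plan is to derive Proposition~\ref{cor.tilting object Grothendieck category} as a direct consequence of the Tilting Theorem~\ref{teor.tilting derived equivalence}, so the task reduces to verifying its hypotheses for the t-structure $\tau_T=(T^{\perp_{>0}},T^{\perp_{<0}})$. The characterization theorem quoted from \cite{NSZM} immediately gives two of the required ingredients: when $T$ is a tilting object, the pair $\tau_T$ is indeed a t-structure on $\mathcal{D}(\mathcal{A})$, and one has $\mathcal{D}(\mathcal{A})^{\leq -n}\subseteq T^{\perp_{>0}}\subseteq \mathcal{D}(\mathcal{A})^{\leq 0}$, which is precisely the aperture condition required by Setup~\ref{setup1}.

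Next, I would identify the class $\mathcal{Y}=\mathcal{A}\cap\mathcal{H}_T$ with the intersection of Ext-kernels. For $Y\in\mathcal{A}$, viewed as a stalk complex in degree zero, one has $\Hom_{\mathcal{D}(\mathcal{A})}(T,Y[j])=\Ext^j_\mathcal{A}(T,Y)$, which vanishes for $j<0$. Hence membership in the shifted coaisle $T^{\perp_{<0}}[1]$ is automatic for such $Y$, and $Y\in\mathcal{H}_T$ reduces to the condition $Y\in T^{\perp_{>0}}$, i.e., $\Ext^k_\mathcal{A}(T,Y)=0$ for every $k>0$. This yields
\[
\mathcal{Y}=\bigcap_{k>0}\Ker(\Ext^k_\mathcal{A}(T,?)),
\]
which is exactly the class appearing in assertion (3) of the characterization theorem. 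Part (3)(b) of that assertion states precisely that $\mathcal{Y}$ cogenerates $\mathcal{A}$, which is the final hypothesis of Theorem~\ref{teor.tilting derived equivalence}.

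With all the assumptions verified, applying Theorem~\ref{teor.tilting derived equivalence} produces the desired triangulated equivalence $\mathcal{D}(\mathcal{H}_T)\stackrel{\cong}{\longrightarrow}\mathcal{D}(\mathcal{A})$, together with the compatibility with the inclusion functor $\mathcal{H}_T\hookrightarrow\mathcal{D}(\mathcal{A})$. There is no significant obstacle: the Tilting Theorem performs all the heavy lifting, and the verification above is essentially bookkeeping. The only point that requires some care is the identification of $\mathcal{Y}$ with the intersection of Ext-kernels, where one must check that the shifted orthogonality defining the heart collapses to a trivial vanishing condition when restricted to objects of $\mathcal{A}$ concentrated in degree zero.
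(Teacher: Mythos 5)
Your proposal is correct and follows essentially the same route as the paper: both reduce the statement to the Tilting Theorem~\ref{teor.tilting derived equivalence} via the characterization theorem from \cite{NSZM}, with the key observation being the identification $\mathcal{A}\cap\mathcal{H}_T=\mathcal{A}\cap T^{\perp_{>0}}=\bigcap_{k>0}\Ker(\Ext^k_\mathcal{A}(T,?))$, whose cogenerating property is condition (3)(b) of that theorem. Your extra remark justifying why the coaisle condition is automatic for stalk complexes in degree zero is a correct elaboration of a step the paper leaves implicit.
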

\begin{proof}
The proof follows directly from 
Theorems \cite{NSZM} and Theorem~\ref{teor.tilting derived equivalence}, after we realize that $\mathcal{A}\cap\mathcal{H}_T=\mathcal{A}\cap T^{\perp_{>0}}=\bigcap_{k>0}{\rm Ker}(\Ext^k_\mathcal{A}(T,?))$. 
\end{proof}

It is worth stating explicitly the dual of the last proposition, which is known when $\mathcal{A}$ is a module category (see \cite{Sto2}[Theorems 4.5 and 5.21]).

\begin{proposition} \label{cor.equivalence cotilting}
Let $\mathcal{A}$ be an abelian category such that $\mathcal{D}(\mathcal{A})$ has $Hom$ sets and arbitrary (small) products, let $Q$ be a cotilting object in $\mathcal{A}$ 
and let $\mathcal{H}_Q$ be the heart of the associated t-structure $({}^{\perp_{<0}}Q,{}^{\perp_{>0}}Q)$ in $\mathcal{D}(\mathcal{A})$. Then there is a triangulated equivalence $\mathcal{D}(\mathcal{H}_Q)\stackrel{\cong}{\longrightarrow}\mathcal{D}(\mathcal{A})$ which is compatible with the inclusion $\mathcal{H}_Q\hookrightarrow\mathcal{D}(\mathcal{A})$.
\end{proposition}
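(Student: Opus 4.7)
The plan is to derive this statement as the formal dual of Proposition~\ref{cor.tilting object Grothendieck category}, substituting Remark~\ref{dualtilt} for the direct application of Theorem~\ref{teor.tilting derived equivalence}. Since the opposite of an abelian category is abelian, and a cotilting object of $\mathcal{A}$ is by definition a tilting object of $\mathcal{A}^{op}$, every ingredient required is obtained by dualizing statements already present in the excerpt.

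First I would apply the dual of the structure theorem from \cite{NSZM} (the one quoted just before Proposition~\ref{cor.tilting object Grothendieck category}) to $Q$ viewed as a tilting object of $\mathcal{A}^{op}$. This yields that $\tau_Q=({}^{\perp_{<0}}Q,{}^{\perp_{>0}}Q)$ is indeed a $t$-structure in $\mathcal{D}(\mathcal{A})$, that
\[
\mathcal{D}^{\leq 0}(\mathcal{A})\subseteq {}^{\perp_{<0}}Q\subseteq \mathcal{D}^{\leq n}(\mathcal{A}),
\]
where $n$ is the injective dimension of $Q$, and, crucially, that the class
\[
\mathcal{Y}:=\bigcap_{k>0}\Ker\bigl(\Ext_\mathcal{A}^k(?,Q)\bigr)
\]
is a generating class of $\mathcal{A}$. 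These are precisely the hypotheses appearing in Remark~\ref{dualtilt}.

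Next I would identify $\mathcal{A}\cap\mathcal{H}_Q$ with $\mathcal{Y}$. Since $\mathcal{A}\subseteq \mathcal{D}^{\leq 0}(\mathcal{A})\subseteq {}^{\perp_{<0}}Q$, every object of $\mathcal{A}$ lies automatically in the aisle, whence $\mathcal{A}\cap\mathcal{H}_Q=\mathcal{A}\cap{}^{\perp_{>0}}Q$. For $X\in\mathcal{A}$ one has $\Hom_{\mathcal{D}(\mathcal{A})}(X,Q[k])=\Ext_\mathcal{A}^k(X,Q)$, so $\mathcal{A}\cap{}^{\perp_{>0}}Q=\mathcal{Y}$. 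In particular $\mathcal{A}\cap\mathcal{H}_Q$ is generating in $\mathcal{A}$, and Remark~\ref{dualtilt} then directly produces the desired triangulated equivalence $\mathcal{D}(\mathcal{H}_Q)\stackrel{\cong}{\longrightarrow}\mathcal{D}(\mathcal{A})$, compatible with the inclusion $\mathcal{H}_Q\hookrightarrow\mathcal{D}(\mathcal{A})$.

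The only real obstacle is the careful bookkeeping of the dualization of the \cite{NSZM} material, whose cotilting counterpart is not spelled out in the excerpt. However, \cite{NSZM} is stated for an arbitrary abelian category, and the hypothesis that $\mathcal{D}(\mathcal{A})$ have $\Hom$ sets and arbitrary small products translates precisely to the tilting hypothesis for $\mathcal{A}^{op}$, so this dualization is entirely formal and introduces no genuinely new difficulty.
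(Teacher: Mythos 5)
Your proposal is correct and follows exactly the route the paper intends: the authors give no explicit proof of Proposition~\ref{cor.equivalence cotilting}, merely presenting it as the formal dual of Proposition~\ref{cor.tilting object Grothendieck category} via Remark~\ref{dualtilt}. Your write-up supplies precisely the dualization bookkeeping (cotilting in $\mathcal{A}$ as tilting in $\mathcal{A}^{op}$, the shifted inclusions $\mathcal{D}^{\leq 0}(\mathcal{A})\subseteq{}^{\perp_{<0}}Q\subseteq\mathcal{D}^{\leq n}(\mathcal{A})$, and the identification $\mathcal{A}\cap\mathcal{H}_Q=\bigcap_{k>0}\Ker(\Ext_\mathcal{A}^k(?,Q))$) that the authors leave implicit.
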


Recall that an object $A$ of $\mathcal{A}$ is \emph{self-small} when the canonical morphism $\text{Hom}_\mathcal{A}(A,A)^{(I)}\longrightarrow\text{Hom}_\mathcal{A}(A,A^{(I)})$ is an isomorphism, for all sets $I$. The following result seems to be unknown even when $\mathcal{A}$ is a module category, and is an easy consequence of Theorem \ref{teor.tilting derived equivalence}.

\begin{corollary} \label{cor.self-small tilting object}
Let $\mathcal{A}$ be an abelian category whose derived category has $Hom$ sets and arbitrary (small) coproducts and let $T$ be a tilting object of $\mathcal{A}$. 
The following assertions are equivalent:

\begin{enumerate}
\item $T$ is a self-small object of $\mathcal{A}$.
\item $T$ is a classical tilting object of $\mathcal{A}$.
\end{enumerate} 
\end{corollary}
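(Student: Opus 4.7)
For (2)$\Rightarrow$(1), compactness of $T$ in $\mathcal{D}(\mathcal{A})$ means that $\Hom_{\mathcal{D}(\mathcal{A})}(T,-)$ preserves arbitrary coproducts. Applied to copies of $T$, and using axiom (T0) of Definition~\ref{ntilting} (which says that $T^{(I)}$ agrees as coproduct in $\mathcal{A}$ and $\mathcal{D}(\mathcal{A})$) together with the identification $\Hom_\mathcal{A}(T,T^{(I)})=\Hom_{\mathcal{D}(\mathcal{A})}(T,T^{(I)})$, valid since both arguments live in $\mathcal{A}$, one concludes that the canonical map $\Hom_\mathcal{A}(T,T)^{(I)}\to \Hom_\mathcal{A}(T,T^{(I)})$ is an isomorphism, i.e.\ that $T$ is self-small.

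For the substantive implication (1)$\Rightarrow$(2), the plan is to use Proposition~\ref{cor.tilting object Grothendieck category} to transport the problem to $\mathcal{D}(\mathcal{H}_T)$ and to realize $\mathcal{H}_T$ as a module category. First I would verify that $T$ is a projective generator of $\mathcal{H}_T$. Projectivity follows because, via the equivalence $\mathcal{D}(\mathcal{H}_T)\simeq\mathcal{D}(\mathcal{A})$, compatible with the inclusion $\mathcal{H}_T\hookrightarrow\mathcal{D}(\mathcal{A})$, one has $\Ext^k_{\mathcal{H}_T}(T,X)\cong \Hom_{\mathcal{D}(\mathcal{A})}(T,X[k])=0$ for $k>0$ and $X\in\mathcal{H}_T\subseteq T^{\perp_{>0}}$. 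Generation follows by combining Lemma~\ref{tiltvscotitl} (which gives that $\mathcal{Y}$ generates $\mathcal{H}_T$) with the characterization $\mathcal{Y}={\rm Pres}^m(T)$ from the preceding theorem of \cite{NSZM}, so that every object of $\mathcal{H}_T$ is an $\mathcal{H}_T$-quotient of some $T^{(I)}$.

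Next I would check that self-smallness of $T$ in $\mathcal{A}$ carries over to $\mathcal{H}_T$. By (T0), $T^{(I)}$ is the coproduct in both $\mathcal{A}$ and $\mathcal{D}(\mathcal{A})$, and it lies in $\mathcal{A}\cap T^{\perp_{>0}}\subseteq\mathcal{H}_T$, so by the universal property it is automatically the coproduct in the full subcategory $\mathcal{H}_T$. Since $\Hom_{\mathcal{H}_T}(T,-)$ and $\Hom_\mathcal{A}(T,-)$ agree on objects of $\mathcal{H}_T$, the canonical map $\Hom_{\mathcal{H}_T}(T,T)^{(I)}\to \Hom_{\mathcal{H}_T}(T,T^{(I)})$ coincides with its $\mathcal{A}$-counterpart and is therefore an isomorphism. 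At this point I would invoke the classical Gabriel-Mitchell characterization of module categories: a cocomplete abelian category with a self-small projective generator $P$ is equivalent, via $\Hom(P,-)$, to ${\rm Mod}\text{-}B$ with $B=\Hom(P,P)^{\rm op}$. Applied to the pair $(\mathcal{H}_T,T)$, this yields $\mathcal{H}_T\simeq {\rm Mod}\text{-}B$ sending $T$ to the free rank-one module $B$, which is compact in $\mathcal{D}({\rm Mod}\text{-}B)\simeq\mathcal{D}(\mathcal{H}_T)$. Compactness then transfers to $\mathcal{D}(\mathcal{A})$ through the equivalence of Proposition~\ref{cor.tilting object Grothendieck category} (again using its compatibility with the inclusion $\mathcal{H}_T\hookrightarrow\mathcal{D}(\mathcal{A})$), and $T$ is classical tilting.

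The main obstacle in this plan is the bookkeeping needed to invoke Gabriel-Mitchell cleanly in the heart: one has to ensure that $\mathcal{H}_T$ is cocomplete (which it is, since coproducts are obtained by applying the truncation $H^0_\tau$ to the coproducts provided by $\mathcal{D}(\mathcal{A})$) and, crucially, that coproducts of $T$ in $\mathcal{H}_T$ really do agree with those in $\mathcal{A}$, for this is the point at which self-smallness crosses between the two categories. Once these compatibilities are in place, the remainder of the argument is formal.
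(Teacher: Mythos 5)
Your proposal is correct and follows essentially the same route as the paper: the implication $(2)\Rightarrow(1)$ is the identical chain of isomorphisms via T0 and compactness, and $(1)\Rightarrow(2)$ likewise reduces to showing that $\mathcal{H}_T$ is a module category with $T$ as a progenerator, so that $T$ is compact in $\mathcal{D}(\mathcal{H}_T)$ and hence, by the compatibility of the equivalence of Proposition~\ref{cor.tilting object Grothendieck category} with the inclusion $\mathcal{H}_T\hookrightarrow\mathcal{D}(\mathcal{A})$, compact in $\mathcal{D}(\mathcal{A})$. The only divergence is that the paper simply cites \cite{NSZM} for the module-category structure of the heart, whereas you rederive it via Gabriel--Mitchell (projectivity from $\mathcal{H}_T\subseteq T^{\perp_{>0}}$, generation from Lemma~\ref{tiltvscotitl} together with $\mathcal{Y}={\rm Pres}^m(T)$, and transfer of self-smallness to $\mathcal{H}_T$), which is sound provided one notes that a self-small projective generator of a cocomplete abelian category is automatically small.
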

\begin{proof}
$2)\Longrightarrow 1)$ On the one hand, by definition of tilting object, the coproduct $T^{(I)}$ is the same in $\mathcal{A}$ and in $\mathcal{D}(\mathcal{A})$. On the other hand, since $T$ is classical by assumption, then it is a compact object of 
$\mathcal{D}(\mathcal{A})$. Hence, for any set I, we have a chain of isomorphisms 
\[\begin{array}{lcl}
\text{Hom}_\mathcal{A}(T,T)^{(I)} &\cong& \text{Hom}_{\mathcal{D}(\mathcal{A})}(T,T)^{(I)}  \\
     &\cong&\text{Hom}_{\mathcal{D}(\mathcal{A})}(T,T^{(I)})  \\
     &\cong& \text{Hom}_\mathcal{A}(T,T^{(I)}),
\end{array}\]
whose composition is the canonical morphism $\Hom_\mathcal{A}(T,T)^{(I)}\longrightarrow \Hom_\mathcal{A}(T,T^{(I)})$.

$1)\Longrightarrow 2)$ By \cite{NSZM}, we know that the heart $\mathcal{H}_T$ of the associated t-structure $\tau =(T^{\perp_{>0}},T^{\perp_{<0}})$ is a module category, with $\tilde{H}(T)=T$ as a progenerator. In particular, $T$ is a compact generator of $\mathcal{D}(\mathcal{H}_T)$.  If now $F:\mathcal{D}(\mathcal{H}_T)\stackrel{\cong}{\longrightarrow}\mathcal{D}(\mathcal{A})$ is a triangulated equivalence which is compatible with the inclusion functor $\mathcal{H}_T\hookrightarrow\mathcal{D}(\mathcal{A})$, then $F(T)\cong T$ is compact in $\mathcal{D}(\mathcal{A})$. 
\end{proof}


\end{document}